\newcommand{\R}{\ensuremath{\mathbb{R}}}
\newcommand{\Z}{\ensuremath{\mathbb{Z}}}
\newcommand{\N}{\ensuremath{\mathbb{N}}}
\newcommand{\E}{\ensuremath{\mathbb{E}}}
\newcommand\EE{{\mathbb E}}
\newcommand\PP{{\mathbb P}}
\newcommand{\Ai}{\ensuremath{\mbox{Ai}}}
 \newcommand{\eps}{\varepsilon}
 \newtheorem{theorem}{Theorem}
 \newtheorem{lemma}[theorem]{Lemma}
 \newtheorem{propo}[theorem]{Proposition}
\title{A clever (self-repelling) burglar}
\author {Laure Dumaz\footnote {Ecole Normale Sup\'erieure, Universit\'e Paris-Sud and TU Budapest}}
\begin{document}

\maketitle

\begin{abstract}
We derive the following property of the ``true self-repelling motion'', a continuous real-valued self-interacting process $(X_t, t \ge 0)$
introduced by B\'alint T\'oth and Wendelin Werner. Conditionally on its occupation time measure at time one (which is the information about how much time it has spent where before time one), the law of $X_1$ is uniform in a certain admissible interval. This interval can be much shorter than the interval of its visited points but it has a positive probability (that we compute) to be this whole set. All this contrasts with the corresponding conditional distribution for Brownian motion that had been studied by Warren and Yor.
\end{abstract}

\section{Introduction}

The true self-repelling motion (TSRM) is a continuous real-valued process $(X_t, t \ge 0)$ constructed by B\'alint T\'oth and Wendelin Werner in \cite {TothWerner},  that is locally
self-interacting with its past occupation-time measure. 
It can be understood as the scaling limit of certain discrete self-repelling integer-valued random walks. 

One of the key-features of TSRM, that in fact enables its construction, is that almost surely, at any given time $t \ge 0$, its occupation time measure $\mu_t$ on $\R$ defined by 
$$ \mu_t ( I ) = \int_0^t 1_{X_s \in I } ds $$
for all interval $I$, has a continuous density $\Lambda_t (x)$ with respect to the Lebesgue measure:
$$\mu_t (I) = \int_I \Lambda_t (x)\, dx.$$
In other words, if the walker $X$ walks with a bag that continuously looses sand, then after time $t$, the sand profile (given by $x \mapsto \Lambda_t (x)$) is a continuous function. Recall that such a property is true for Brownian motion, but that it fails to be true for smooth evolutions $t \mapsto X_t$ (as it would create a discontinuity of the sand profile -- also referred to as the local time profile -- at the point $X_t$). 

The process $(X_t, t \ge 0)$ is interesting because its paths are of a very different type than Brownian motion (they do not have a finite quadratic variation for instance). It will turn out to be relevant for the present paper
to note that $X$ does almost surely have times of local increase: There almost surely exist (many) positive times $s$ such that for some positive $\epsilon$, one has $X_{s-v} < X_s < X_{s+v}$ for all $v \le \epsilon$. Recall that this is almost surely never the case for Brownian motion (see \cite {Burdzy} and the references therein). 
Hence, it follows easily that with positive probability, there will exist exceptional times $s \in (0, 1)$ such that 
$$ X_v < X_s < X_w $$ 
for all $0 \le  v < s < w \le 1$. By symmetry, because $X$ and $-X$ are identically distributed, the same holds for time of decrease.

The position $X_s$ corresponding to such times $s$ can be detected by looking at the local time profile at time $1$. Indeed, they are exactly those points in the support $S$ of the local time profile, for which $\Lambda_1 (x) = 0$. Suppose that the local time profile $\Lambda_1$ is given and contains such points. They are exceptional (their Lebesgue measure is null) and a.s. the process cannot go fast two times through the same point (it is not possible to have $\Lambda_1(x) = 0$ for points $x$ visited more than once by the process before time $1$). Therefore, $X_0=0$ and $X_1$ have to be on different sides of each of these points $x$. Hence, it follows that necessarily, the position $X_1$ is located in the subinterval $I$ of $S$ that is separated from $0$ by all these cut-points. When there are no cut-points (and it happens with a positive probability we will compute in Section \ref{sec:probawholeint}), we define $I$ to be the entire support $S$. The observation of $\Lambda_1 (\cdot)$ therefore limits the possible values of $X_1$ to $I$.
The goal of the present paper is to prove that:
\begin {theorem}
\label {mainthm}
The conditional distribution of $X_1$ given $\Lambda_1( \cdot)$ is the uniform distribution in the interval $I$.
\end {theorem}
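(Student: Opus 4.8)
The plan is to combine the cut-point decomposition recalled above with the T\'oth--Werner construction of $X$ together with its whole family of occupation-time densities.

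\emph{A reduction.} Suppose first that $\Lambda_1$ has at least one cut point. These necessarily all lie on the same side of $0$, say the positive one (if $c>0$ were a cut point then $X_1>c$, and if $c'<0$ were one then $X_1<c'$), so $I=[p,b]$ with $p$ the largest of them; let $s$ be the unique time at which $X$ crosses $p$, so that $X_v<p<X_w$ for all $0\le v<s<w\le1$. Then the entire profile $\Lambda_1|_I$ is produced after time $s$ by a trajectory that starts from $p$, stays in $I$, and ends at $X_1$, while $\Lambda_s$ vanishes on $[p,\infty)$; by the renewal property of the motion at the instant it enters territory carrying no prior occupation, and since the conditioning forces this trajectory not to return to $p$ before time $1$, the piece $(X_{s+u}-p)_{u\ge0}$ is a true self-repelling motion started afresh at $0$, independent of the past given these data. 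Hence, conditionally on $\Lambda_1$, the law of $X_1$ is that of the endpoint of a true self-repelling motion from $0$ conditioned to have occupation density $f:=\Lambda_1(p+\cdot)$ at time $\ell:=\int_I\Lambda_1$ --- a continuous function, positive on $(0,b-p)$ and vanishing at both endpoints --- and one checks that this conditioning automatically confines the motion to $[0,b-p]$. It therefore suffices to treat the case in which the support of the conditioning profile is a single interval with the starting point at one end; the complementary case, in which $\Lambda_1$ has no cut point at all and $0$ is interior to $S$, is handled the same way by the construction-based argument below.

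\emph{The conditional law from the construction.} I would then use that in \cite{TothWerner} the motion is built simultaneously with all the maps $(t,x)\mapsto\Lambda_t(x)$, and the Ray--Knight-type description of the space evolution of the local-time profile at suitable stopping times (a Markov process in $x$ absorbed at $0$, equivalently a system of coalescing/reflecting Brownian paths), to write the joint law of (occupation density, endpoint $X_1$) as an explicit functional of that underlying random field. There $X_1$ is the spatial location at which the profile-building clock is stopped when the elapsed time reaches $\ell$; I expect it to appear as the point at which a Brownian bridge-like object read off from the profile attains an extremum, or as a coordinate determined by the dual coalescing web.

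\emph{Uniformity: the crux.} It then remains to prove that, with the occupation density fixed, this location is uniform on $[0,b-p]$. One route is to produce a measure-preserving symmetry of the construction --- a reflection or shift of the coalescing-Brownian system, or, at the level of the path, a resampling of the final stretch of the trajectory between its last two passages through a generic level --- that fixes the occupation density while moving the endpoint, with orbits filling the whole interval; invariance together with an absolute-continuity argument then forces uniformity. Alternatively one computes the relevant one-dimensional density directly, paralleling Warren and Yor's analysis for Brownian motion; the point is that the self-repelling profile law, unlike the squared-Bessel profile that yields their non-uniform answer, makes this density come out constant --- for instance because the endpoint is genuinely the argmax of a Brownian bridge.

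\emph{Main obstacle.} The difficulty is twofold. First, one conditions on the function-valued variable $\Lambda_1$: the appropriate $\sigma$-algebra must be fixed, the disintegration established, and its compatibility with the cut-point surgery and with the ``fresh excursion'' identification --- including the a.s.-negligible but delicate behaviour exactly at the cut point $p$, and the treatment of the no-cut-point case --- verified. Second, and more essentially, one must extract \emph{exact} uniformity, not merely absolute continuity with a tame density, from the T\'oth--Werner construction; this is where the self-repelling structure must be used in a way with no Brownian counterpart, and it is precisely what makes the answer differ from the Brownian ``burglar'' of Warren and Yor.
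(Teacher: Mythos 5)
Your proposal correctly identifies the objects involved (the coalescing-web construction of the occupation-time densities, the cut-point structure of $I$, the delicacy of conditioning on a function-valued variable), but it stops exactly where the proof has to begin: the step you label ``Uniformity: the crux'' is not an argument. You offer two candidate routes --- a measure-preserving symmetry of the coalescing system, or a direct density computation paralleling Warren--Yor --- without producing either, and your own ``Main obstacle'' paragraph concedes that exact uniformity is not extracted. The reduction to a one-sided profile via the last cut point, even granting the renewal claim at the crossing time $s$ (which itself needs justification, since TSRM is not Markov and the asserted conditional independence is nontrivial), does not bring you closer to uniformity; the paper needs no such reduction and treats all cases at once.

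The mechanism that actually yields uniformity is concrete and different from anything you sketch. One passes to a discrete self-repelling walk whose local-time profiles are traced by a discrete web of coalescing simple random walks. For a fixed admissible (modified) profile $f$ and a position $x$ in the admissible interval $I(f)$, the event $E_{x,f}$ that the walk realizes $f$ with its current position at $x$ has probability $(1/2)^{(m_+-m_-)-O(f)}$: the exponent counts the independent fair coin tosses (one per rectangle of the labyrinth) needed to draw the forward curve from $(x+1/2,f(x))$ and the backward curve from $(x-1/2,f(x))$ along $f$, and this count depends only on $f$, not on $x$. That identity is the entire source of uniformity. The residual $x$-dependence sits only in the \emph{time} $n(x,f)=\mathcal{A}(f)+f(x)$ at which the profile is observed; since $f(x)\le\sqrt{\mathcal{A}(f)}$ is of lower order than $\mathcal{A}(f)$, randomizing the observation time by an independent geometric variable (exponential in the limit) makes this dependence vanish in total variation, and an invariance-principle argument (convergence of the discrete web to the Brownian web, evaluated at the independent exponential time, followed by the scaling relation that converts time $\tau$ back to time $1$) transfers the exact uniform law to the continuum. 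Without this counting identity, or a genuine substitute for it, your outline remains a plan rather than a proof.
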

This may at first seem quite surprising. Indeed, it for instance means that the conditional distribution depends only on $I$ and not on the actual local time profile in $I$. 
On the other hand, we will see that this is a rather natural feature of TSRM, inherited from properties of a family of coalescing Brownian motions. 
 
\begin{figure}
 \centering
\includegraphics[width = 15cm]{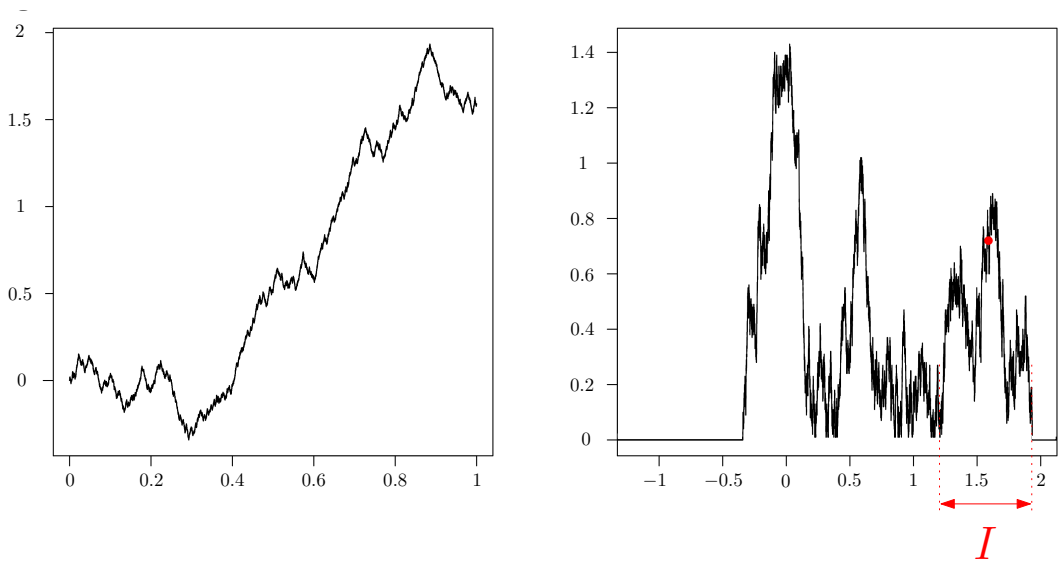}
\caption{Sample of the TSRM trajectory until $t=1$ and of its local time profile at time $1$.}
\end{figure}

This type of question was already studied in the case of Brownian motion by Warren and Yor in \cite{BBYor} and later also by Aldous in \cite{BBAldous}. The resulting distribution (of the position of the Brownian motion given its local time) was called ``Brownian burglar'' because one can imagine that someone is  trying to find a burglar moving like a Brownian motion and that the only piece of information one knows is the places he has previously robbed (or how many hotel bills he has paid etc.). We can keep here the same picture in mind except this self-repelling burglar is somehow more clever, because he manages to leave very little information behind. It is in fact a natural question to ask whether it is possible to find processes of a different kind with a similar property.

The TSRM is directly defined in the continuous setting without reference to a discrete model. It is in fact not so easy to prove that discrete self-repelling walks converge to the TSRM in strong topologies (see Newman and Ravinshankar \cite{NewmanCV}). But some of the properties related to TSRM are rather tricky to derive directly in the continuous setting, while their discrete counterparts are easy. 
A natural route to deriving them is therefore to control this property in the scaling limit (when the discrete model tends to TSRM); See for instance \cite {STW} for a use of such invariance principles for the derivation of the 
joint law of local times measures at different stopping times. This is also the approach that we will use in the present paper.

However, as we will see, some minor complications pop in due to the fact that the result corresponding to Theorem \ref {mainthm} in the discrete setting fails to be exactly true (it will hold only up to an error term that vanishes in the scaling limit). It will therefore be convenient to randomize time instead of considering the fixed time $1$, and we will in fact establish the following variant of Theorem \ref {mainthm}:
\begin {theorem}
 \label {main2}
Suppose that $\tau$ is an exponential random variable with mean $1$ that is independent of the TSRM $X$. Then, 
the conditional distribution of $X_\tau$ given $\Lambda_\tau( \cdot)$ is the uniform distribution in the interval $I_\tau$.
\end {theorem}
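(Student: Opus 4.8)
The plan is to transport the statement to a discrete self-repelling walk that is known to converge to the TSRM, to prove the analogue there up to a vanishing error, and then to pass to the scaling limit; randomising the time will be what makes the discrete computation manageable.

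\emph{Step 1 (discretisation).} Let $(Y_k)_{k\ge 0}$ be the bond-repulsion true self-avoiding walk on $\Z$ started at $0$, whose rescaled trajectory and local-time profile converge to those of the TSRM; see \cite{TothWerner} for the walk and \cite{NewmanCV,STW} for the relevant invariance principles. In the appropriate scaling, $N$ steps, displacements of order $N^{2/3}$ and local times of order $N^{1/3}$ become of order one. Run $Y$ for $T$ steps, where $T$ is geometric with mean $N$ and independent of $Y$, so that $T/N$ converges in law to an exponential variable of mean one. By the joint invariance principle it then suffices to show that, conditionally on the rescaled local-time profile of $Y$ stopped at time $T$, the rescaled endpoint $Y_T$ becomes uniform on the discrete admissible interval, and that this interval converges to $I_\tau$.

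\emph{Step 2 (the discrete near-identity).} Condition on the occupation profile $(n_k)_k$ of $Y$ up to time $T$; its support is a discrete interval $[a,b]\ni 0$. The edge-crossing counts $(\ell_k)_k$ are then an explicit function of $(n_k)_k$ and of the endpoint $m$, obtained by solving $\ell_{k-1}+\ell_k = 2n_k-\mathbf 1_{k=0}-\mathbf 1_{k=m}$ from outside the support inwards; as $m$ varies by one step, $(\ell_k)_k$ changes by one unit at a single edge. The endpoint $m$ is admissible precisely when all the $\ell_k$ stay positive, i.e.\ when $m$ is not separated from $0$ by any edge whose count would drop to $0$ -- a ``discrete cut edge'' -- which is the lattice analogue of the interval $I$. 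Since $T=\sum_k\ell_k$ is a function of the profile, the geometric clock assigns the same weight $\PP(T=|\gamma|)$ to every path $\gamma$ realising a given edge profile, so
\[
\PP\bigl(Y_T=m \,\big|\, (n_k)_k\bigr)\;\propto\;\sum_{\gamma:\,0\to m} W(\gamma),
\]
the sum being over nearest-neighbour paths from $0$ to $m$ with occupation profile $(n_k)_k$, and $W(\gamma)$ the product of transition probabilities along $\gamma$. The algebraic form of the bond-repulsion rule makes the numerators in $W(\gamma)$ telescope into a function of the edge profile alone, so the only $m$-dependence that survives comes from the single perturbed $\ell_k$ and from the denominators, which depend on the order of visits. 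The content of Step~2 is that this produces only a multiplicative factor $1+o(1)$, uniformly over $m$ in the admissible window and over a set of profiles of asymptotically full probability; equivalently, one exhibits an almost weight-preserving bijection between the path ensembles ending at neighbouring admissible endpoints. Granting this, $\PP(Y_T=m\mid (n_k)_k)$ is uniform on the admissible interval up to a vanishing error.

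\emph{Step 3 (passage to the limit) and the main difficulty.} Feeding Step~2 into the joint invariance principle of Step~1, the pair (rescaled profile, rescaled endpoint) converges in law to $(\Lambda_\tau(\cdot),X_\tau)$, the discrete admissible interval converges to $I_\tau$ (the degenerate cases $I_\tau=S$ or $I_\tau$ a single point being handled as in Section~\ref{sec:probawholeint}), and a sequence of conditional laws that are uniform up to $o(1)$ converges weakly to a genuinely uniform law; this identifies the conditional law of $X_\tau$ given $\Lambda_\tau(\cdot)$ and proves the theorem. The principal obstacle is the uniform control in Step~2: the admissible window has width of order $N^{2/3}$ while the perturbed local times are only of order $N^{1/3}$, so one must show that a perturbation accumulated over many sites still costs a factor $1+o(1)$ in the weighted path count -- this is exactly the ``error term that vanishes in the scaling limit'' mentioned above, and using a memoryless geometric time rather than the deterministic time $N$ is precisely what reduces the problem to this single estimate by removing the length-dependence of the weights. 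A secondary technical point is that Step~1 requires the joint convergence of the local-time profiles \emph{together with} their cut-point structure, so that the discrete admissible intervals really do converge to $I_\tau$; this rests on the strong invariance principles available for the TSRM.
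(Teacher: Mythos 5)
Your overall strategy --- discretise, prove approximate uniformity for a geometrically-stopped walk, pass to the scaling limit --- is the paper's, but both of the steps that carry the mathematical content are asserted rather than proved, and the mechanism you propose for the crucial one is not the one that works. In Step 2 you reduce the theorem to showing that the weighted path count $\sum_{\gamma:0\to m}W(\gamma)$ over paths with a prescribed occupation profile is constant in $m$ up to a factor $1+o(1)$, to be established by an ``almost weight-preserving bijection''; you then write ``granting this''. That is a restatement of the problem, and the combinatorics of summing self-repelling weights over all paths with a given profile is not tractable by telescoping, precisely because (as you note yourself) the denominators depend on the order of visits. The paper avoids path-counting entirely by choosing the particular walk of \S 11 of \cite{TothWerner}, whose edge local times form a discrete web of coalescing random walks: conditionally on the modified profile $f$ and the surgery position $x$, the event $E_{x,f}$ is exactly the event that the two independent web lines through $(x,f(x))$ trace out $f$, and its probability $(1/2)^{(m_+-m_-)-O(f)}$ is \emph{exactly} independent of $x\in I(f)$. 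The only surviving $x$-dependence is through the time index $n(x,f)=\mathcal{A}(f)+f(x)$; so the error is not a perturbation accumulated over $O(N^{2/3})$ sites, as you fear, but a single shift of size $f(x)\le\sqrt{\mathcal{A}(f)}$ in the exponent of the geometric weight, killed by $(1-2/A)^{\pm\sqrt{\mathcal{A}(f)}}\to1$ (Lemma \ref{conddistribdiscrete}). Without an exact structural input of this kind --- which depends on picking the right discrete model --- your Step 2 has no proof.

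The second gap is in Step 3. You invoke a ``joint invariance principle'' giving convergence of the profile \emph{together with} its cut-point structure, attributed to strong invariance principles for the TSRM. The paper explicitly declines that route: the map sending a profile to its admissible interval is discontinuous at the limiting objects (the opposite-excursion functional is not continuous at $\Lambda_{x,h}$, as discussed in the proof of Proposition \ref{CVsimple}), so convergence of profiles cannot simply be pushed through the continuous mapping theorem, and extending the Newman--Ravishankar convergence to the intervals $I$ is nontrivial. Instead, the paper applies the area change of variables $t=T_{x,h}$ to both the discrete and the continuous expectations, reducing everything to the convergence of a single reflected/absorbed random walk from a fixed $(x,h)$ to the corresponding Brownian web line, where the almost sure convergence of first hitting times of $0$ (via Skorohod coupling) resolves the discontinuity. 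You would need an argument of comparable precision for the convergence of $I(\gamma_A)$ to $I_\tau$; as written, this step is also missing.
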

Here $I_t$ is the obvious generalization at time $t$ of $I$. The scaling property of TSRM i.e., the fact that for any positive $A$, 
$ (X_{A t}, t \ge 0) $ has the same law as $(A^{2/3} X_t, t \ge 0)$ together with the fact that $\tau$ can be read off from $\Lambda_\tau (\cdot)$ (it is the area under this curve) 
shows immediately that this statement is equivalent to Theorem \ref {mainthm}. 

\section {The result in the discrete setting}

Let us now describe a discrete self-repelling random walk $(\tilde X_n, n \ge 0)$ on the integers
introduced in \S 11 of \cite{TothWerner}, that we will use in the present paper, and establish the discrete analog of Theorem \ref {main2} for this walk. Note that  other self-repelling walks do also converge to the TSRM (e.g. the Amit-Parisi-Peliti true self-avoiding walk \cite {APP}) but this one turns out to be very convenient for our purposes because its ``local times'' form a simple discrete web.
It can be defined in two equivalent ways that we now briefly review.

\paragraph{Self-interacting random walk definition.} The first approach is to view $(\tilde X_n, n \ge 0)$ as a self-interacting random walk. Throughout the paper, we will view the set $E=\Z + 1/2$ as the set of edges that join two consecutive integers. 
 For all $n \in \N$ and $e \in E$, let $l(n, e)$ denotes the number of jumps along the edge $e$ before the $n$-th step:
$$
l(n,e) = \#\left\{k \in \{0,\cdots,n-1\},\; \{\tilde X_k,\tilde X_{k+1}\} = \{e-1/2,e+1/2\}\right\}.
$$
In fact, it is convenient to define a slight modification $\ell_n (e)$
of $l(n, e)$ by 
$$
\ell_n (e) = l(n,e) + a(e)
$$
where $a(e)$ is equal to  $0$ or $-1$ depending on whether $|e| - 1/2$ is even or odd, respectively.

The law of the random walk $(\tilde X_n)$ is then defined inductively as follows: $\tilde X_0=0$ and for all $n \ge 0$, if we define 
$$ \ell_n^-:=  \ell_n (\tilde X_n -1/2)  \hbox { and } \ell_n^+ := \ell_ n (\tilde X_n +1/2) $$
the slightly modified local times on the edges neighboring $\tilde X_n$, then 
\begin{align*}
 \PP\left(\tilde X_{n+1} = \tilde X_n + 1 | \tilde X_0,\cdots,\tilde X_n\right) &= 1 - \PP\left(\tilde X_{n+1} = \tilde X_n - 1 | \tilde X_0,\cdots,\tilde X_n\right)  \\
&= 
\left\{
\begin{array}{lll}
1 &\mbox{ if } &\ell_n^- > \ell_n^+\\
1/2 &\mbox{ if } &\ell_n^- = \ell_n^+ \\
0 &\mbox{ if } &\ell_n^- < \ell_n^+ \\ 
\end{array}\right.
\end{align*}
In other words, at step $n$, the walk chooses to jump along the neighboring edge it has visited less often in the past (modulo the initializing term $a$), and in case $\ell_n^+ = \ell_n^-$, it tosses a fair coin to choose its direction. 

It is important to remark that the initial state $a$ is chosen in such a way that one has $|a(e) - a(e+1)| = 1$ for all $e \in E \setminus \{\tilde X_0-1/2\} (= E \setminus \{-1/2\})$ and $a(-1/2) = a(1/2)$, and this rule perpetuates: $|\ell(n,e) - \ell(n,e+1)| = 1$ for all $n \in \N$ and all $e \in E$ except one, which is the edge $e = \tilde X_n -1/2$. At this point, we have  $\ell(n,e) - \ell(n,e+1) \in \{-2,0,2\}$. Thus, with such an initial condition, $\tilde X$ can be read off from $\ell$ (therefore, $\ell$ is Markov).
The initial condition $a$ is the ``flattest'' condition one can define which follows those rules. The condition on the initialization permits to avoid some artificial deterministic behaviors such as the one given by the initial local time null everywhere (in this case the walk would go deterministically in the direction of its first choice). Note that one can consider other natural initializations $a$ (such as the i.i.d. case, when $(a(k +1/2), k\in\N)$ and $(a(-k -1/2), k\in\N)$ are random and follow independent simple random walks starting at $0$), that will converge to TSRM with other initial condition (the i.i.d case converges towards the ``stationary'' TSRM defined in \S 10 of \cite{TothWerner}).

\paragraph{Second approach.} It turns out (and this is very simple to check, see \S 11 of \cite {TothWerner}) that this walk can also be interpreted in terms of a path that walks through a simple two-dimensional labyrinth in the 
upper half-plane. Let us write $\N^\# := \N \cup \{ -1 \}$ and let $F$ and $B$ be the lattices: 
\begin{align*}
 &F:= \{ (x-1/2 , h) \in (\Z - 1/2) \times \N^\#  \ : \  x+h \hbox { is odd}\}, \\
&B:= ((\Z-1/2) \times \N^\# ) \setminus F.
\end{align*}
We divide the upper half-plane into $1 \times 2$ rectangles of the type 
$(x-1/2, x+1/2) \times (h-1, h+1)$ for $(x-1/2,h) \in F$. In each rectangle associated with $(x-1/2,h) \in F$ such that $h \notin \{-1,0\}$, we toss independently a fair coin in order to choose between the two fillings depicted in Figure \ref {rectangleposs}: Either we draw two upwards parallel lines from $(x-1/2, h) \in F$ to $(x+1/2, h+1) \in F$ and from $(x-1/2, h-1) \in B$ to $(x+1/2, h) \in B$ or we draw two downwards lines from $(x-1/2, h) \in F$ to $(x+1/2, h-1) \in F$ and from $(x-1/2, h+1) \in B$ to $(x+1/2, h) \in B$.

\begin{figure}[!h]
 \centering
\includegraphics[width = 10cm]{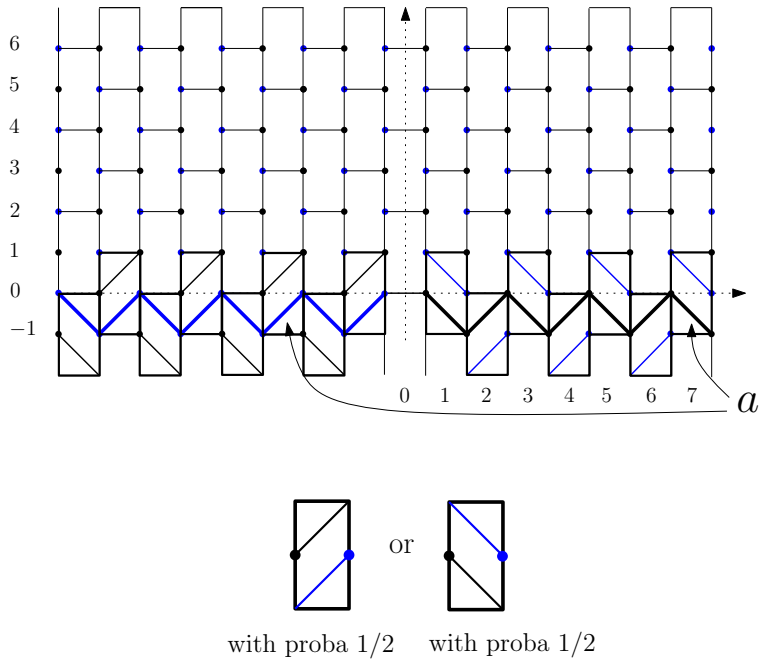}
\caption{The lattice with the initial conditions and the possible configurations in a rectangle}
\label{rectangleposs}
\end{figure}

If $h \in \{-1,0\}$, the lines are determined by the initial condition $a$ defined above: For all $e \in E \setminus \{-1/2\}$, we draw the line from $(e,a(e))$ to $(e+1,a(e+1))$ and its parallel line located in the same rectangle (see Figure \ref {rectangleposs} where the initial lines are drawn). When $e=-1/2$ we do not draw any line.

Note that in this way, the lines going through the lattice $F$ are a family of independent coalescing simple random walks going forward (i.e. to the right) starting from each point of $F$ and reflected above $0$ the left side of the origin and absorbed by $0$ on the right side of the origin. Similarly, the lines going through $B$ creates coalescing backwards random walks, that do never cross the forward lines (see Fig. \ref{sampleCRW}). Those families correspond to the discrete counterpart of the ``Brownian web'', a family of coalescing independent Brownian motions (see \S \ref{sec:CV} of this paper and \cite{TothWerner} for more details) and using this analogy, we call it the discrete web.
 
\begin{figure}[!h]
 \centering
\includegraphics[width = 9cm]{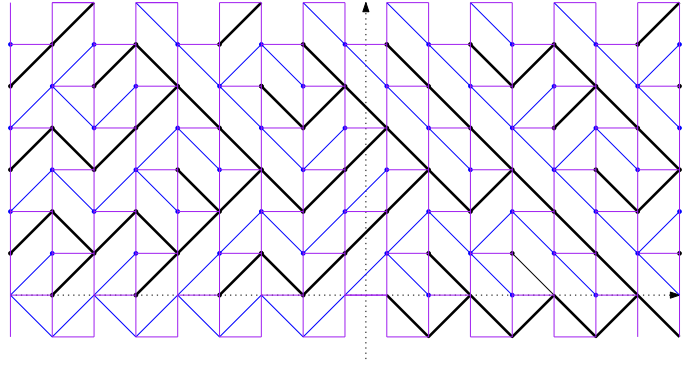}
\caption{A sample of the coalescing random walks}\label{sampleCRW}
\end{figure}

The two families of forward and backward lines create a random maze, with
one single connected component (this is a simple consequence of the
coalescing property). The path starting at $(0,0)$ that explores this maze (see Fig. \ref{RWWeb}) can be viewed as a discrete path we denote $(\tilde X_n,\tilde H_n) \in \Z \times \N$.
As shown in \cite{TothWerner}, its first coordinate has the same distribution as the $\Z$-valued random walk defined above
and its second coordinate corresponds to the average of the slightly modified local times at time $n$ on the two edges adjacent to $\tilde X_n$, i.e., 
$\tilde H_n =  (\ell_n^+ + \ell_n^-) /2$.

\begin{figure}
\includegraphics[width = 10cm]{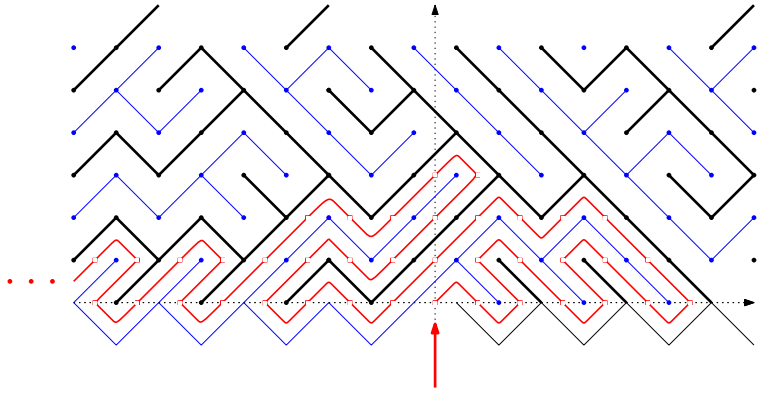}
\caption{Sample of the 39 first steps of $(\tilde X_n, \tilde H_n)$} 
\label{RWWeb}
\end{figure}
 
\medskip

For each $(e_0,h) \in F$ with $h \ge 1$, and each $e \in E$ with $e \ge e_0$, we denote by 
$\tilde  \Lambda_{e_0,h}(e)$ the value at $e$ of the \emph{forward} line in the web that starts at $(e_0,h)$ (it is a simple random walk indexed by $E$ and absorbed by $0$)
When $(e,h) \in B$, we use the same notation $\tilde \Lambda_{e,h}(e')$ to define the backward line starting in the left direction from $(e,h)$ (defined for $e'  \le e$). Then, at time $n$, if the walker is at the position $x$,
the local time profile $e \mapsto \ell_n (e)$ is equal to the lines $\tilde \Lambda_{x+1/2, \ell_n^+} ( e)$ and $\tilde \Lambda_{x-1/2, \ell_n^-} (e)$ respectively for $e >x$ and $e< x$. Each time the walk is at the bottom line of a rectangle, it discovers the status of the rectangle (meaning that it reveals if the lines are upwards or downwards lines in the rectangle) and this corresponds to moments for which $\ell_n^+ = \ell_n^-$. The position at time $n+1$ will be $\tilde X_n \pm 1$ if it is upwards/downwards lines, respectively. When the walk is in the middle of a (revealed) rectangle i.e. when $\ell_n^+ \neq \ell_n^-$, it follows the direction given by the lines in the rectangle: It goes to the right/left is the lines are downwards/upwards lines, respectively.

\begin{figure}
 \centering\includegraphics[width = 12cm]{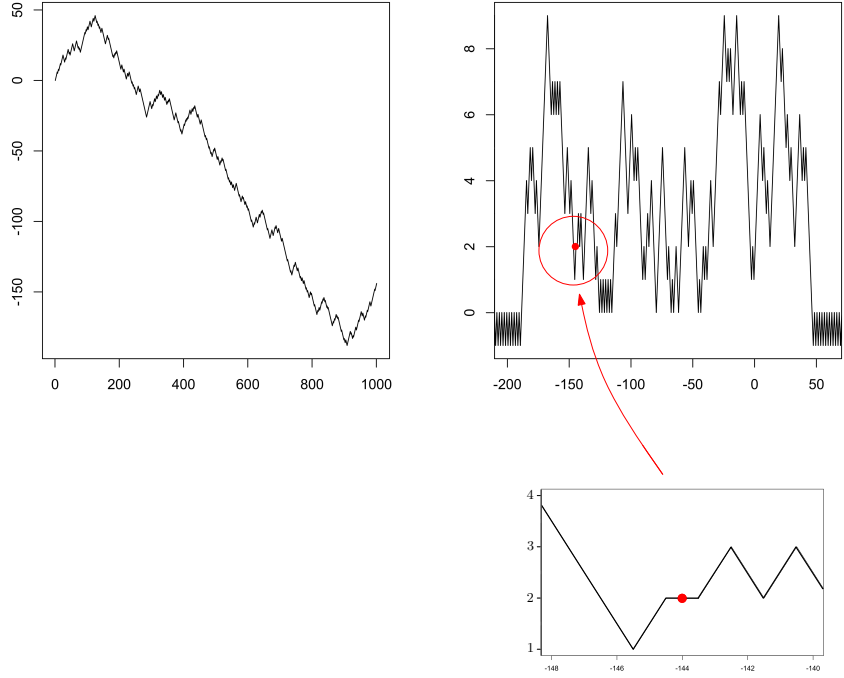}
\caption{Sample of the discrete model until time $n=1000$. On the left, $k \mapsto \tilde X_k$; on the right, the local time at time $n$ and $(\tilde X_n,\tilde H_n)$ dotted. A zoom is made around the position at time $n$}\label{figaveczoom}
\end{figure}

\paragraph{Modified local time.} Suppose now that a time $n$ is given, and that we observe the discrete local time profile at time $n$ i.e., the function $e \mapsto \ell_n (e)$. What can one say about the conditional law of $\tilde X_n$? 
A first observation is that one can immediately recover the location of $\tilde X_n$ by just looking at the local time profile, because as already noticed in the first definition, it is the only integer $x$ such that $|\ell_n(x+1/2) - \ell_n(x-1/2)| \neq 1$ (see the zoom in Fig. \ref{figaveczoom}).

So, in order to ``erase'' this information, it is natural to slightly modify $\ell_n$
 locally in the neighborhood of $\tilde X_n$. There are in fact several ways to proceed.
 The one that we choose to work with in the present paper is to simply concatenate the part of $\ell$ to the left of $x-1/2$ directly to the part to the right of $x+1/2$. However, then one may still be able to detect where such a surgery took place if $|\ell_n^- - \ell_n^+ | =2$. To avoid this problem, we will restrict our observations to the times at which 
$\ell_n^- = \ell_n^+$. 

\medbreak

We therefore denote by $(N(k), k \in \N)$ the times at which one really tosses a coin:  $N(0) = 0$, and for every $k \ge 0$, $N(k+1) := \inf\{n > N(k)\,:\, \ell_n^+ = \ell_n^-\}$. 
As one can somehow expect (half of the space-time points correspond to times $N(k)$ while the other half not, see Fig. \ref{RWWebcercles}), $N(k)$ almost surely behave like $2k$ up to lower order terms when $k \to \infty$ (it will be proved later, see \eqref{ineqpourk}). In fact, we have an exact formula which links $N(k)$ to $k$.
\begin{lemma}
 For all $k \ge 0$,
\begin {equation}
\label{N(k)}
 k = \frac{N(k) + \tilde H_{N(k)} }{2}.
 \end {equation}
\end{lemma}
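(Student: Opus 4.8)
The plan is to prove the stronger deterministic identity that, for every $n \ge 0$, the quantity $G_n := n + \tilde H_n$ satisfies
$$ G_{n+1} - G_n = \mathbf{1}\{\ell_n^+ = \ell_n^-\} + \mathbf{1}\{\ell_{n+1}^+ = \ell_{n+1}^-\}, $$
that is, the increment of $G$ along the step $n \to n+1$ counts how many of $n$ and $n+1$ are coin-toss times. Granting this, the lemma is immediate: $G_0 = \tilde H_0 = (a(1/2) + a(-1/2))/2 = 0$ since $a(\pm 1/2) = 0$, and summing the identity over $n = 0, \dots, N(k) - 1$ telescopes the left-hand side to $G_{N(k)}$ while the right-hand side equals $|\{N(0), \dots, N(k-1)\}| + |\{N(1), \dots, N(k)\}| = k + k = 2k$ (using that the coin-toss times are $0 = N(0) < N(1) < \cdots$); hence $N(k) + \tilde H_{N(k)} = 2k$.

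The heart of the argument is a short case analysis for this identity, using only the transition rule for $\tilde X$, the update $\ell_{n+1}(e) = \ell_n(e) + \mathbf{1}\{e \text{ is crossed at step } n\}$, the formula $\tilde H_n = (\ell_n^+ + \ell_n^-)/2$, and the invariant $|\ell_n(e) - \ell_n(e+1)| = 1$ for every edge $e \ne \tilde X_n - 1/2$, with the exceptional difference in $\{-2, 0, 2\}$. Write $p = \tilde X_n$, $v = \ell_n^+$, $u = \ell_n^-$. By the reflection $x \mapsto -x$, which swaps $\ell^+ \leftrightarrow \ell^-$ and leaves $G$ and the coin-toss condition unchanged, we may assume the walk steps to $p + 1$; then either $u = v$ ($n$ is a coin toss) or $u = v + 2$. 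In both cases the edge $p + 1/2$ is crossed, so $\ell_{n+1}^- = v + 1$, and since $p + 3/2 \ne \tilde X_n - 1/2$ is a regular edge at time $n$ we get $\ell_{n+1}^+ = \ell_n(p + 3/2) \in \{v - 1, v + 1\}$. Substituting into
$$ G_{n+1} - G_n = 1 + \frac{1}{2}\bigl(1 + \ell_n(p + 3/2) - u\bigr), $$
the four combinations $(u, \ell_n(p+3/2)) \in \{v, v+2\} \times \{v-1, v+1\}$ yield $2, 1, 1, 0$; and reading $\ell_{n+1}^+$ against $\ell_{n+1}^-$ shows that $n+1$ is a coin toss exactly when $\ell_n(p+3/2) = v+1$, so in each of the four cases the value of $G_{n+1}-G_n$ is precisely $\mathbf{1}\{n \text{ coin toss}\} + \mathbf{1}\{n+1 \text{ coin toss}\}$.

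Finally, the statement tacitly assumes $N(k) < \infty$, which the same analysis delivers: on a run of consecutive non-coin-toss steps the value of $\ell^+$ (or $\ell^-$ on a leftward run) drops by exactly $1$ at each step, so, since $\ell_n(e) = l(n,e) + a(e) \ge -1$ always, such a run lasts at most $\tilde H_{N(k)} + 2$ steps. I expect the only genuinely fiddly part to be carrying out the four sub-cases carefully enough that the uniform formula for $G_{n+1} - G_n$ — and hence the clean telescoping — becomes transparent; the behaviour at the origin requires nothing extra, since the only inputs used (the difference invariant with its single exceptional edge, and regularity of the edge $\tilde X_n + 3/2$) hold there as well.
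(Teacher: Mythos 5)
Your proof is correct and rests on the same combinatorial fact as the paper's: during the forced moves between consecutive coin tosses each unit of time costs exactly one unit of height, so $n+\tilde H_n$ advances (by one) only at a coin-toss time and at the step landing on one. The paper packages this as an induction on $k$ over the blocks between $N(k)$ and $N(k+1)$ (``sliding down a slope''), while you telescope a per-step identity verified via the local-time difference invariant --- a reorganization rather than a different route, though your version has the small bonus of making the finiteness of $N(k)$ explicit.
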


There are various possible proofs of this simple combinatorial identity. We give a short one that uses our ``random walk'' interpretation:

\begin{proof}
 Indeed, the identity clearly holds for $k=0$ because  $N(0)=\tilde H_0=0$. Suppose it holds for $k$. Then, if $N(k+1)=N(k)+1$ it means that $\tilde H_{N(k+1)} = 1+ \tilde H_{N(k)}$, and therefore the identity for $k+1$ follows from that for $k$. If $N(k+1)= N(k) + j$ with $j \ge 2$, then it means that the TSRM has made $j-1$ forced moves, i.e. $(\tilde X_n, \tilde H_n)$ did ``slide down'' a slope. Therefore $\tilde H_{N(k+1)} = \tilde H_{N(k)} - (j -1) +1 = \tilde H_{N(k)} -j +2$ from which (\ref {N(k)}) follows. 
\begin{figure}[!h]
 \centering
\includegraphics[width = 7cm]{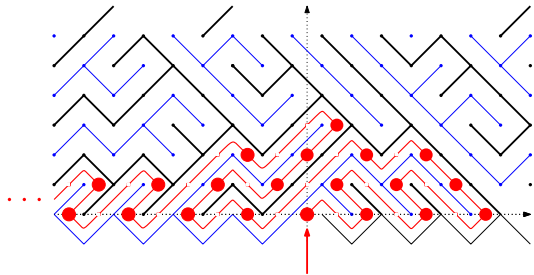}
\caption{Dots correspond to the times $N(k)$}
\label{RWWebcercles}
\end{figure}
\end{proof}

We will now mainly restrict ourselves to the set of times at which $\ell_n^+ = \ell^-_n$ i.e. to the case where $n \in N(\N)$ and we construct the curve $x \mapsto \tilde \ell_{n} (x)$ for all integer $x$ as follows: 
$$\tilde \ell_{n}(x) := \ell_n(x-1/2)  1_{x \le \tilde X_n} + \ell_n (x+1/2) 1_{x > \tilde X_n}.$$
In plain words, we shift the graph of $\ell_{n}$ horizontally by $1/2$ in the direction of $\tilde X_{n}$ on both sides of $\tilde X_n$. Note that for large enough $|x|$, the function $\tilde \ell_n$ coincides with the shifted function initial line $\tilde \ell_0$ i.e. with  $\tilde a(x)=  -1_{x \in (2 \Z + 1)}$. 

To sum up the previous few paragraphs: We have defined the function $x \mapsto \tilde \ell_{n}(x)$ which is a slightly modified local time, 
where we lost some information about the position of $\tilde X_{n}$ in the case where $\ell_n^+ = \ell_n^-$. In the rest of the paper, we shall refer to this modified curve $\tilde \ell$ as the ``modified local time''.

\begin{figure}
 \includegraphics[width=8cm]{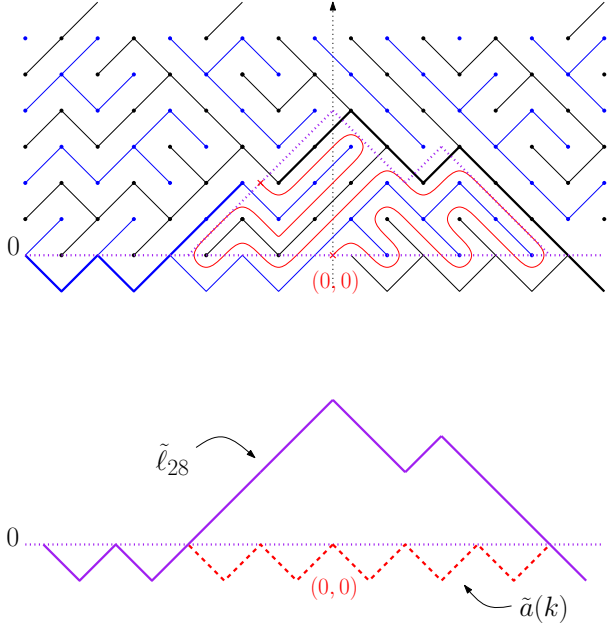}
\caption{The self-repelling walk until $n=28$ and its modified local time}
\label{modifiedloctime}
\end{figure}

\paragraph{Properties of modified local times.} In the remaining of this section, $f: \Z \mapsto \N$ will always denote a function that has a positive probability of being realized by some $\tilde \ell_{N(k)}$ for some $k \ge 0$, i.e. such that 
$$ \PP ( \exists k \ : \  f = \tilde \ell_{N(k)} ) > 0 .$$
We denote by $\mathcal C$ this set of functions.
Note that being in ${\mathcal C}$ implies certain necessary conditions for $f$: It is a function $f : \Z \to \N^\#$ such that $|f(x+1) - f(x)|= 1$ for all $x$, and 
if we define $m_- = m_- (f) := 1 + \max \{ x \in - \N  \, : \, f(x) = -1 \}$ and $m_+ = m_+ (f) := - 1 + \min \{ x \in \N \,: \, f(x) =-1 \}$, then 
$f= \tilde a$ on $(- \infty , m_-] \cup [m_+, \infty)$. Furthermore, we can note that $f(0)$ is necessarily even (this is just because if $\tilde X_n \le 0$, then $\tilde X$ has jumped an even number of times along the edge $1/2$, and if $\tilde X_n >0$, then it has jumped an even number of times on $-1/2$).

Note that it can happen that $f(x)$ is equal to $0$ for some integer values of $x$ in $(m_-, m_+)$ (but it can never be equal to $-1$ on this interval). 
Let $O(f)$ denote the number of zeroes of $f$ in this open interval. We define an excursion-interval to be a maximal interval on which $f$ is positive. Then $(m_-, m_+)$ can be split into $O(f)+1$ excursion-intervals. 

Note that $0$ necessarily belongs to the left-most or to the right-most excursion interval: 
Indeed, suppose for instance that $f= \tilde \ell_N$ and $\tilde X_{N} \ge 0$  (the case $\tilde X_{N} \le 0$ can be treated similarly), then every 
edge to the left of the origin 
is visited an even number of times (because $\tilde X$ has to jump equally often in both directions along that edge, as it starts to its right and ends up to its right), from which it follows that 
$0$ is in the left-most excursion interval of $f$.
Exactly the same argument shows that $\tilde X_N$ also belongs to the left-most or to the right-most interval which is at the opposite side of the $0$-interval, and also that $\tilde X_N$ cannot be one of the $O(f)$ internal zeros of $f$.
 
Hence, let us define 
 $I(f)$  to be equal to
         $[m_-, m_+]$  if $O(f) = 0$, and if $O(f) \ge 1$, then 
$$I (f) = (o_+, m_+] \hbox { or } [m_-, o_-)$$
depending on whether $ o_+ := \max \{ x < m_+: \ f(x) = 0 \}$ is positive or
 $ o_- : = \min \{ x > m_-: \ f(x) = 0 \} $  is negative. In the special case where $f = \tilde a$, we set $I(f)= \{0\}$.
 
Then necessarily, if $\tilde \ell_{N(k)} = f$ for some $k$, then this implies that $\tilde X_{N(k)} \in I(f)$.
\begin{figure}[!h]
 \centering
\includegraphics[width = 11cm]{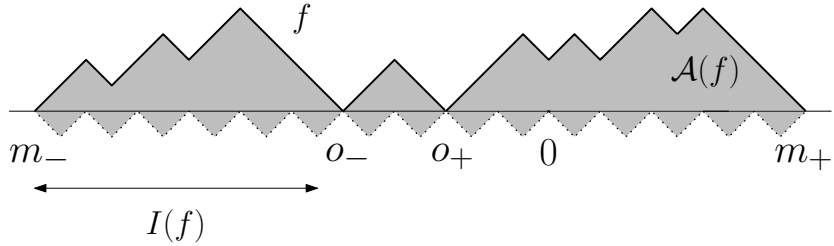}
\caption{A modified local time and some notations}\label{fig:defpourloctime}
\end{figure}
\medskip

\paragraph{Behavior of the walk given a modified local time curve $f$ and a position $x$.} Conversely, if $f \in {\mathcal C}$ is given and if $x \in I (f)$, we denote by $E_{x,f}$ the event that $f$ is a modified local time for which the surgery took place at position $x$ i.e. that for some $k$, 
$f= \tilde \ell_{N(k)}$ and $\tilde X_{N(k)} = x$. This means exactly that 
$\tilde \Lambda_{x+1/2, f(x)} (e)$ and $\tilde \Lambda_{x-1/2, f(x)} (e)$ are equal to $f(e+1/2)$ or $f(e-1/2)$ depending on $e < x$ or $e >x$. 

But, for a given $f$ and $x$, the point $(x,f(x))$ is fixed and the forward curve $\tilde \Lambda_{x+1/2, f(x)}$ and the backward one $\tilde \Lambda_{x-1/2, f(x)}$ are distributed as independent random walks reflected at $0$ during the time-interval between $x$ and the origin and absorbed by $0$ outside this interval. It therefore follows that
\begin{align*}
 \PP(E_{x,f}) = \left(\frac{1}{2}\right)^{(m_+ - m_-) - O(f)}.
\end{align*}
We crucially see that $\PP (E_{x,f})$ is the same for all $x$ in $I(f)$.

\medbreak

Let us now suppose that $E_{x,f}$ holds, and study the value of $n$ and $k$ at which $(\tilde X_n, \tilde H_n )= (\tilde X_{N(k)}, \tilde H_{N(k)}) = (x,f(x))$.
Clearly both $n$ and $k$ are determined by $x$ and $f$. This is the content of the following Lemma:
\begin{lemma}
Let us fix $f \in {\mathcal C}$ and $x \in I (f)$. We denote by ${\mathcal A}(f)$ the area between $f$ and $\tilde a$ i.e. ${\mathcal A}(f) = \sum_{y} (f(y) - \tilde a(y))$. If $E_{x,f}$ holds, then the time $n(x,f) = N(k(x,f))$ at which $(\tilde X_n, \tilde H_n )= (x,f(x))$ verifies:
\begin{align*}
 n(x,f) = {\mathcal A}(f) + f(x).
\end{align*}
Consequently,
\begin{align}
 k(x,f) =  (n(x,f) + f(x))/2 = {\mathcal A}(f)/2 + f(x). \label{relkn}
\end{align}
\end{lemma}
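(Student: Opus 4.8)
The plan is to recognise that both the running time $n$ and the area $\mathcal A(f)$ are, up to the contribution of a single edge, the total number of edge-crossings performed by the walk, and to match them against each other. The elementary input is that each step of $\tilde X$ crosses exactly one edge, so that for every $n$
\[
 n=\sum_{e\in E}l(n,e)=\sum_{e\in E}\big(\ell_n(e)-a(e)\big),
\]
a finite sum since $l(n,e)=0$ for all but finitely many edges. Thus $n$ is exactly the ``area'' between the true local-time profile $\ell_n$ and the reference profile $a$.

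Next I would rewrite $\mathcal A(f)=\sum_{y\in\Z}\big(\tilde\ell_{n(x,f)}(y)-\tilde a(y)\big)$ --- again a finite sum, since $\tilde\ell_{n(x,f)}$ coincides with $\tilde a$ outside $[m_-,m_+]$ --- in terms of edge quantities. Write $n=n(x,f)$ and $x=\tilde X_n$, and let $\phi$ be the map sending $y\mapsto y-1/2$ for $y\le x$ and $y\mapsto y+1/2$ for $y>x$. Then $\phi$ is a bijection of $\Z$ onto $E\setminus\{x+1/2\}$ carrying $\tilde\ell_n(y)$ onto $\ell_n(\phi(y))$, while the reference $\tilde a$ (being the half-shift of $a$ around $0$, not around $x$) agrees with $a\circ\phi$ everywhere except on the finitely many sites lying between $0$ and $x$. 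Substituting $\ell_n=l(n,\cdot)+a$ and collecting terms gives
\[
 \mathcal A(f)=\sum_{e\neq x+1/2}l(n,e)\;+\;R\;=\;n-l(n,x+1/2)+R,
\]
where $R=\sum_{y}\big(a(\phi(y))-\tilde a(y)\big)$ is a finite parity discrepancy supported on the sites between $0$ and $x$.

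The one step needing a little care --- and where I expect the (minor) obstacle to sit --- is the evaluation of $R$, carried out by splitting into the cases $x>0$, $x=0$ and $x<0$ (the middle one trivial, the last symmetric to the first). One finds $R=-a(x+1/2)$, whence $l(n,x+1/2)-R=l(n,x+1/2)+a(x+1/2)=\ell_n(x+1/2)=\ell_n^+$ and therefore $\mathcal A(f)=n-\ell_n^+$. Since $n=n(x,f)$ is one of the coin-tossing times, $\ell_n^+=\ell_n^-$, and then $f(x)=\tilde\ell_n(x)=\ell_n^-=\ell_n^+=\tilde H_n$; this yields $n(x,f)=\mathcal A(f)+f(x)$, the first assertion. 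The second assertion then follows immediately from the identity $k=(N(k)+\tilde H_{N(k)})/2$ established above, applied with $N(k)=n(x,f)$ and $\tilde H_{n(x,f)}=f(x)$: indeed $k(x,f)=\big(n(x,f)+f(x)\big)/2=\mathcal A(f)/2+f(x)$.
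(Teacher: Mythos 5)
Your proof is correct and follows essentially the same route as the paper: both start from the observation that $n=\sum_{e}l(n,e)=\sum_e(\ell_n(e)-a(e))$ (the time equals the area between the true local time and $a$) and then reindex to compare with ${\mathcal A}(f)=\sum_y(f(y)-\tilde a(y))$, the discrepancy $f(x)$ arising from the column removed at $x$. The only cosmetic difference is that you organize the bookkeeping via the bijection $\phi$ and the remainder $R=-a(x+1/2)$, whereas the paper reindexes the four partial sums directly; both yield $n=\mathcal A(f)+f(x)$ and then \eqref{relkn} via \eqref{N(k)}.
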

\begin{proof}
 The time
$n=n(x,f)$ is equal to the area between the non modified local time associated to the pair $(x,f)$ and the initial local time $a$. Let us denote here the non modified function by $g$. For all $e \in E$ such that $e \le x-1/2$, $g(e) := f(e+1/2)$ and when $e \ge x-1/2$, $g(e) := f(e-1/2)$. Therefore, the time $n$ is equal to ${\mathcal A}(f)$, to which one has to add $f(x)$, because of the 
additional column of height $f(x)$ that one has removed in order to obtain $\tilde \ell$ out of $\ell$ (see Figure \ref{modifiedloctime}).
More precisely,
\begin{align*}
 n &= \sum_{e \in E} g(e) - a(e) \\
&=\sum_{\substack{e \in E\\ e \in [m_- -\frac12, x-\frac12]}} f(e+\frac12) + \sum_{\substack{e \in E\\ e \in [x +\frac12, m_+ + \frac12]}} f(e-\frac12)
- \sum_{\substack{e \in E \\e \in [m_- -\frac12,-\frac12]}} \tilde a(e+\frac12) -
\sum_{\substack{e \in E\\ e \in [\frac12, m_+ + \frac12]}} \tilde a(e-\frac12)  \\
&= f(x) - \tilde a(0) + \sum_{y \in \Z} f(y) - \tilde a(y) \\
&= f(x) + {\mathcal A}(f).
\end{align*}
The first equality in \eqref{relkn} follows from (\ref {N(k)}).
\end{proof}

In particular, we see that for a given $f$, the time $n(x,f)$ depends on the position of $x$ in $I(f)$. However, we will see that this dependency is mild: 
because the slope of $f$ is never larger than one, it follows that for any $x$, the area underneath $f$ is at least $|f(x)|^2$. With equality \eqref{relkn}, this implies that
\begin {equation}
\frac {{\mathcal A}(f)} 2  \le k(x,f) \le \frac {{\mathcal A}(f)} 2 + \sqrt {{\mathcal A}(f)}. \label{ineqpourk}
\end {equation}

\paragraph{Randomization of the observation.} For each $A$, let us define a geometric random variable $q_A$ with mean $A/2$. We want to describe the joint distribution of 
$$ ( x_A, \gamma_A ( \cdot) ) := (\tilde X_{N(q_A)}, \tilde \ell_{N(q_A)} ( \cdot)).$$
Suppose we only observe $\gamma_A (\cdot)$. As we have already indicated, the point $x_A$ is necessarily in the interval $I(\gamma_A)$. 
Let us sample uniformly an integer $u_{A}$ in this interval.  We now show that the law of $(u_A, \gamma_A)$ is close to that of $(x_A, \gamma_A)$ when $A$ is large.
More precisely:

\begin{lemma}\label{conddistribdiscrete}
The total variation distance between the distributions of $(u_A, \gamma_A)$ and $(x_A, \gamma_A)$ tends to $0$ as $A \to \infty$.
\end{lemma}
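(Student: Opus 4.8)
The plan is to make both laws fully explicit and then exploit a separation of scales: the random observation time ranges over a window of size of order $A$, which is far larger than the width of the set of positions compatible with a given modified local time (only of order $\sqrt{\mathcal A(\gamma_A)}$ by \eqref{ineqpourk}). First I would write down the joint law of $(x_A,\gamma_A)$. Since $q_A$ is independent of the walk and, by the discussion preceding the statement, for $f\in\mathcal C$ and $x\in I(f)$ the event $\{(x_A,\gamma_A)=(x,f)\}$ is exactly $E_{x,f}\cap\{q_A=k(x,f)\}$, we get $\PP((x_A,\gamma_A)=(x,f))=\PP(E_{x,f})\,\PP(q_A=k(x,f))$. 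As $\PP(E_{x,f})$ takes a common value $p(f)$ for all $x\in I(f)$, writing $S_f:=\sum_{x\in I(f)}\PP(q_A=k(x,f))$ we have $\PP(\gamma_A=f)=p(f)S_f$, while $(u_A,\gamma_A)$ assigns mass $\PP(\gamma_A=f)/|I(f)|$ to each $(x,f)$ with $x\in I(f)$. Hence
\[ d_{TV}\big((x_A,\gamma_A),(u_A,\gamma_A)\big)=\frac12\sum_{f\in\mathcal C}p(f)\sum_{x\in I(f)}\Big|\PP\big(q_A=k(x,f)\big)-\frac{S_f}{|I(f)|}\Big|, \]
so everything reduces to bounding the oscillation of $x\mapsto\PP(q_A=k(x,f))$ on $I(f)$.

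Next I would use that, by \eqref{relkn}, $k(x,f)=\mathcal A(f)/2+f(x)$, so by \eqref{ineqpourk} all these integers lie in an interval of length $\le\sqrt{\mathcal A(f)}$. With $p_A$ the parameter of the geometric variable $q_A$ (so that $\PP(q_A=j+1)=(1-p_A)\PP(q_A=j)$ and $p_A\le 2/A$), Bernoulli's inequality gives $|\PP(q_A=j)-\PP(q_A=j')|\le p_A\sqrt{\mathcal A(f)}\,\min(\PP(q_A=j),\PP(q_A=j'))$ for $j,j'$ in that interval. Bounding each term of the inner sum by $|I(f)|^{-1}\sum_{x'\in I(f)}|\PP(q_A=k(x,f))-\PP(q_A=k(x',f))|$, substituting this estimate, and using $\min(a_x,a_{x'})\le a_x$ to collapse the resulting double sum back to $|I(f)|\,S_f$, one obtains $\sum_{x\in I(f)}\bigl|\PP(q_A=k(x,f))-S_f/|I(f)|\bigr|\le p_A\sqrt{\mathcal A(f)}\,S_f$. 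Therefore the $f$-contribution is at most $p_A\sqrt{\mathcal A(f)}\,\PP(\gamma_A=f)$, and summing,
\[ d_{TV}\big((x_A,\gamma_A),(u_A,\gamma_A)\big)\le\frac12\,p_A\,\E\big[\sqrt{\mathcal A(\gamma_A)}\big]. \]

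Finally I would control this expectation: applying \eqref{relkn} at $(x_A,\gamma_A)$ gives $q_A=\mathcal A(\gamma_A)/2+\tilde H_{N(q_A)}\ge\mathcal A(\gamma_A)/2$, hence $\mathcal A(\gamma_A)\le 2q_A$, and by Jensen $\E[\sqrt{\mathcal A(\gamma_A)}]\le\sqrt2\,\E[\sqrt{q_A}]\le\sqrt2\,\sqrt{\E q_A}=\sqrt A$ since $\E q_A=A/2$. Combining the two displays yields $d_{TV}\le 1/\sqrt A\to 0$.

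The step that requires the most care is the middle one: one must organize the total variation sum so that each $f$-contribution is controlled by $\PP(\gamma_A=f)$ times the genuinely small factor $p_A\sqrt{\mathcal A(f)}$, and not by a quantity one cannot sum. The trick is to estimate the masses $\PP(q_A=k(x,f))$ pairwise and keep everything relative to the monotone geometric mass function, so that the double sum telescopes back to $S_f$; a naive bound replacing averages by maxima is too lossy and would force an artificial case split according to whether $\mathcal A(f)$ is small or comparable to $A^2$. Once this is set up, the rest is bookkeeping based on the exact identities \eqref{N(k)}, \eqref{relkn}, \eqref{ineqpourk} and the $x$-independence of $\PP(E_{x,f})$.
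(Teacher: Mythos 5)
Your proof is correct and follows essentially the same route as the paper: both arguments rest on the $x$-independence of $\PP(E_{x,f})$ on $I(f)$ together with \eqref{ineqpourk}, which confines $k(x,f)$ to a window of length $\sqrt{\mathcal{A}(f)}$ that is negligible on the scale $A$ of the geometric variable $q_A$ (the paper packages this as the two-sided ratio bound $(1-2/A)^{\pm\sqrt{\mathcal{A}(f)}}$ and shows its expectation tends to $1$, while you bound the difference additively and extract the explicit rate $O(A^{-1/2})$). The one slip is your Bernoulli step: $|\PP(q_A=j)-\PP(q_A=j')|\le p_A\sqrt{\mathcal{A}(f)}\,\min(\PP(q_A=j),\PP(q_A=j'))$ can fail (the difference equals $((1-p_A)^{-d}-1)\cdot\min$, and $(1-p_A)^{-d}-1$ can exceed $p_A d$, e.g.\ when $\mathcal{A}(f)$ is of order $A^2$), but the inequality holds with $\max$ (hence with the sum) in place of $\min$, which only costs a factor of $2$ in the final bound and leaves the argument intact.
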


\begin{proof}

In order to prove the Lemma, we have to see that the probabilities that 
 $x_A = x$ and $\gamma_A = f$ are almost the same for all $x \in I(f)$. The reason for this result is that $f(x)$ will tend to be much smaller than ${\mathcal A}(f)$ in the limit when $A$ tends to $\infty$ for a proportion of $f$'s that goes to $1$. 
Clearly, this probability (for a given $f \in \mathcal{C}$ and $x \in I(f)$) is equal to 
$$ \PP\left( E_{x, f},\; q_A= k(x,f)\right)= \PP(E_{x,f}) \PP(q_A = k (x,f)) 
= \left(\frac{1}{2}\right)^{(m_+-m_-) - O(f)} \left(1-\frac{2}{A}\right)^{k(x,f)-1} \frac{2}{A}.$$

When $A \to \infty$, the time $q_A$ will typically be large so that $\sqrt{{\mathcal A}(\gamma_A)}$ will become negligible compared to ${\mathcal A}(\gamma_A)$. 
For every $f \in \mathcal{C}$, let us define
$$
p(f) = \left(\frac{1}{2}\right)^{(m_+-m_-) - O(f)} \left(1-\frac{2}{A}\right)^{{\mathcal A}(f)/2-1} \frac{2}{A}.
$$
Thanks to (\ref{ineqpourk}), for every $f \in \mathcal{C}$ and $x \in I(f)$, we can write:
$$
\left(1-\frac{2}{A}\right)^{\sqrt{\mathcal{A}(f)}} \le \frac { \PP ( (x_A, \gamma_A ) = (x,f))}{p(f)} \le 1.
$$
Taking the mean over $x$ in $I(f)$, we get that 
$$
\left(1-\frac{2}{A}\right)^{\sqrt{\mathcal{A}(f)}} \le \frac { \PP ( \gamma_A = f )}{p(f) \times \# I(f) } = \frac { \PP ( (u_A, \gamma_A ) = (x,f))}{p(f)} \le 1.
$$
Hence, 
$$
\left(1-\frac{2}{A}\right)^{\sqrt{\mathcal{A}(f)}} \le 
 \frac { \PP ( (x_A, \gamma_A ) = (x,f))} { \PP ( (u_A, \gamma_A ) = (x,f))} \le 
 \left(1-\frac{2}{A}\right)^{-\sqrt{\mathcal{A}(f)}} .
$$
It therefore suffices to prove that 
$$ 
\E \left( \left(1-\frac{2}{A}\right)^{\sqrt{\mathcal{A}(\gamma_A)}} \right) \to 1 
\hbox { and } \E \left( \left(1-\frac{2}{A}\right)^{-\sqrt{\mathcal{A}(\gamma_A)}} \right) \to 1 
$$ 
as $A \to \infty$. This is straightforward because  
$$
\frac {{\mathcal A}(\gamma_A)} 2  \le q_A  \le \frac {{\mathcal A}(\gamma_A)} 2 + \sqrt {{\mathcal A}(\gamma_A)}$$
keeping in mind also that $q_A$ is a geometric random variable with mean $A/2$. 
\end {proof}

\section{Convergence from discrete to continuous}\label{sec:CV}

The definition of the continuous TSRM by T\'oth and Werner uses a continuous analogue of the definition of $\tilde X_n$ via the maze that $(\tilde X_n, \tilde H_n)$ has to go through. One starts with a family of coalescing Brownian motions (instead of coalescing random walks) $(\Lambda_{x,h},\, (x,h)\in \R \times \R^*_+)$ starting from all points in the upper half-plane (such families had been constructed
by Arratia in \cite{Arratia}, and were further studied in \cite{TothWerner, STW, NewmanBW, NewmanCV} and are called Brownian web (BW) in the latter papers) which is shown to define a continuous plane-filling maze. For each $(x,h)$ in the upper half plane, $\Lambda_{x,h}$ has the distribution of a (one dimensional) two-sided Brownian motion with initial condition $\Lambda_{x,h}(x) = h$ reflected above $0$ in the time-interval between $0$ and $x$ and absorbed by $0$ outside this interval. One of the main property of the BW family is that almost surely its curves do not cross. The interaction between the BW-curves can be understood as follows:
One can imagine we first define the BW-curves starting from points belonging to a countable dense subset of the upper half plane $Q = (x_i,h_i)_{i \in \N^*}$. We also take $(x_0,h_0) := (0,0)$ and let $\Lambda_{x_0,h_0}$ be the function identically equal to $0$. We then construct the curves recursively. Given $(\Lambda_{x_i,h_i}, 0\le i < k)$, the \emph{forward} process $(\Lambda_{x_k,h_k}(x),x \ge x_k)$ has the law of an independent Brownian motion starting at $h_k$ at time $x_k$ coalescing with the \emph{forward} previously drawn curves $(\Lambda_{x_i,h_i}(x),x \ge x_i ; \, 0\le i < k)$ and reflected at the \emph{backward} curves $(\Lambda_{x_i,h_i}(x),x \le x_i ; \, i < k)$. The \emph{backward} process $(\Lambda_{x_k,h_k}(x),x \le x_k)$ is then an independent Brownian motion coalescing with the \emph{backward} previously drawn curves and reflected at the \emph{forward} ones. In this way, we defined the ``skeleton'' of the BW. One can extend this definition to the whole half plane by continuity (there is some freedom left for the construction of the other curves, we do not enter into the details here).

\medskip

The intuitive link between TSRM and BW goes as follows: Let us consider the process
$(X_t,H_t)$ started at $(0, 0)$ which traces the contour of the ``tree'' of these coalescing Brownian motions, then 
one obtains a space-filling curve that can be parametrized by the area it has swept. It means that for every $(x,h)$ in the upper half plane, the process $(X_t,H_t)$ visits the point $(x,h)$ at the random time $t = T_{x,h} := \int \Lambda_{x,h}$. There is a difficulty here because the set of times given by the family $(T_{x,h},\,(x,h) \in \R \times \R^*_+)$ is not the entire set of times $\R_+$. Nevertheless, it has good enough properties to enable to define the process $(X,H)$. 
 Its first coordinate $X$ will be the TSRM, while the second coordinate $H_t$ will turn out to be equal to its occupation time density (defined in the introduction) at its current position i.e. $ H_t = \Lambda_t ( X_t)$.  Moreover, the distribution (and even the joint distribution) of the occupation time density is known at the random times $T_{x,h}$. By construction, we have: a.s., for all $(x,h) \in \R \times \R^*_+$, $\Lambda_{T_{x,h}} = \Lambda_{x,h}$. We refer to \cite{TothWerner} for more details.

\medskip

Newman and Ravishankar \cite {NewmanCV} have shown that the (properly renormalized version of the) process $(\tilde X_n , \tilde H_n)$ converges to the continuous process $(X_t, H_t)$. We could try to extend it in order to also have convergence of the corresponding intervals $I$, but as we only need the convergence at the independent random time $\tau$, we will follow a more direct method. Let $\R$ and $\R^2$ be equipped with the Euclidean topology and let us denote by $C$ the set of continuous functions with compact support from $\R$ to $\R^+$ admitting a left-most and a right-most excursion such that $0$ belongs to the right-most or the left-most excursion, equipped with the uniform topology. 

\medskip

\noindent The goal is to establish the following convergence (where an interval $(x_-, x_+)$ in $\R$ is identified with a point $(x_-, x_+) \in \R^2$):

\begin{propo}\label{CVsimple}
The triplet
\begin{align}{\label{tripletsimple}}
(A^{-2/3}x_A, A^{-1/3} \gamma_A(A^{2/3}\cdot), A^{-2/3} I(\gamma_A)) = \left(A^{-2/3}\tilde X_{N(q_A)},\,A^{-1/3} \tilde \ell_{N(q_A)}(A^{2/3}\cdot),\, A^{-2/3} I\left(\ell_{N(q_A)}\right)\right)
\end{align}
converges in distribution towards
$$
(X_{\tau}, \; \Lambda_{\tau}(\cdot),\; I_{\tau}) \qquad \mbox{as } A \to \infty.
$$
(recall that $I_{\tau}$ is  the opposite excursion from the $0$-excursion in the continuous process $\Lambda_{\tau}$). 
\end{propo}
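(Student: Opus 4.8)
\medskip
\noindent\emph{Proof proposal.} The plan is to deduce the convergence from the invariance principle relating the discrete web to the Brownian web. First I would recall that, by Newman and Ravishankar \cite{NewmanCV}, the rescaled pair $(A^{-2/3}\tilde X_{\lfloor At\rfloor},A^{-1/3}\tilde H_{\lfloor At\rfloor})_{t\ge0}$ converges in distribution to $(X_t,H_t)_{t\ge0}$ — uniformly on compact time-intervals, the limit being continuous — and that, since this pair is a functional of the web, the convergence holds jointly with that of the discrete web to the Brownian web \cite{NewmanBW,NewmanCV}. The extra ingredient I need is that, at a fair-coin time $N(k)$ (these satisfy $N(k)=2k-\tilde H_{N(k)}$ by \eqref{N(k)}, hence $N(k)\approx 2k$), the modified local time $\tilde\ell_{N(k)}(\cdot)$ is exactly the concatenation, up to the $\pm1/2$ shift, of the \emph{forward} web-line issued from $(\tilde X_{N(k)}+1/2,\tilde H_{N(k)})$ and the \emph{backward} web-line issued from $(\tilde X_{N(k)}-1/2,\tilde H_{N(k)})$, both suitably reflected and absorbed at $0$ — and that in the continuous picture $\Lambda_t(\cdot)=\Lambda_{X_t,H_t}(\cdot)$ is built from the Brownian web in precisely the same way. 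Hence the rescaled profile is a continuous functional of $\big(\text{web},(\tilde X,\tilde H)\big)$, and I obtain
$$\Big(A^{-2/3}\tilde X_{N(\lfloor At/2\rfloor)},\ A^{-1/3}\tilde H_{N(\lfloor At/2\rfloor)},\ A^{-1/3}\tilde\ell_{N(\lfloor At/2\rfloor)}(A^{2/3}\cdot)\Big)_{t\ge0}\ \longrightarrow\ \big(X_t,H_t,\Lambda_t(\cdot)\big)_{t\ge0}$$
in distribution, with the uniform topology on the last coordinate. (Alternatively, as the paper suggests, one may avoid path-space tightness and work only with the state of the web at the single time $\approx A\tau$.)

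\medskip
\noindent Next I would bring in the randomised observation time. As $q_A$ is a geometric variable of mean $A/2$ independent of the discrete web, $2q_A/A$ converges in law to an exponential variable $\tau$ of mean $1$, independent of the Brownian web; by Skorokhod's representation I realise all of the above, together with $2q_A/A\to\tau$, almost surely on one probability space. Evaluating the displayed process at $t=2q_A/A$ (for which $\lfloor At/2\rfloor=q_A$) and using that $t\mapsto(X_t,H_t,\Lambda_t(\cdot))$ is almost surely continuous at $t=\tau$ — which holds because $\tau$, being independent of $X$, almost surely avoids the exceptional times of local increase and decrease, so that $H_\tau=\Lambda_\tau(X_\tau)>0$ — I get
$$\Big(A^{-2/3}\tilde X_{N(q_A)},\ A^{-1/3}\tilde H_{N(q_A)},\ A^{-1/3}\tilde\ell_{N(q_A)}(A^{2/3}\cdot)\Big)\ \longrightarrow\ \big(X_\tau,H_\tau,\Lambda_\tau(\cdot)\big).$$
This is the convergence of the first two coordinates of \eqref{tripletsimple} (recall $x_A=\tilde X_{N(q_A)}$) together with the convergence of the rescaled modified local time.

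\medskip
\noindent It remains to handle the interval coordinate, namely $A^{-2/3}I(\gamma_A)\to I_\tau$, and this is the step I expect to be the main obstacle. I would prove it by showing that the functional $f\mapsto I(f)$ — sending a nonnegative compactly supported function to the pair of endpoints of the excursion opposite to the $0$-excursion — is almost surely continuous at $\Lambda_\tau$ for the uniform topology, and then composing with the profile convergence above. The difficulty is that this functional is genuinely discontinuous at ``degenerate'' profiles: at functions whose support has an ambiguous endpoint; at functions possessing a flat null stretch or a tangential return to $0$ in the interior of their support; and at the boundary of the event $\{O(f)=0\}$, where an extreme cut-point is about to appear or vanish. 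One must therefore check that $\Lambda_\tau$ almost surely avoids all these configurations, using the regularity inherited from the Brownian web: near the endpoints $m_\pm$ of its support, $\Lambda_\tau$ is a piece of a Brownian motion absorbed at $0$ and hence crosses $0$ transversally, so $m_\pm$ are stable functionals; its interior zeros — the cut-points — are almost surely isolated and transversal (and $X_\tau$, having $\Lambda_\tau(X_\tau)=H_\tau>0$, lies in the interior of its excursion, not at a cut-point), so the extreme cut-point $o_\mp$ defining $I_\tau$ is stable under uniform perturbation; and almost surely $\Lambda_\tau$ lies strictly inside $\{O\ge1\}$ (there is a bona fide interior zero) or strictly inside $\{O=0\}$ (the infimum of $\Lambda_\tau$ over the relevant region is positive), so the dichotomy in the definition of $I$ is locally constant at $\Lambda_\tau$. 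The quantitative inputs here — transversality of the zeros of the constituent Brownian-web lines and the almost sure absence of accidental tangencies or flat stretches — are standard facts about Brownian motion and the Brownian web; granting them, $I$ is continuous at $\Lambda_\tau$, and the joint convergence of the full triplet \eqref{tripletsimple} follows.
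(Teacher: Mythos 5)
Your route is the one the paper explicitly mentions and then declines to follow (``We could try to extend it [the Newman--Ravishankar invariance principle] in order to also have convergence of the corresponding intervals $I$, but \ldots we will follow a more direct method''), and the step you defer to the literature is precisely the hard part. Your first paragraph asserts that the process-level convergence of $(A^{-2/3}\tilde X, A^{-1/3}\tilde H)$ holds \emph{jointly} with the convergence of the discrete web to the Brownian web, and that the rescaled modified local time is a \emph{continuous functional} of the pair (web, current point). Neither is available off the shelf. The convergence of webs is in the Fontes--Isopi--Newman--Ravishankar path-space topology, under which ``the forward and backward lines through a given point'' is not a continuous map; to extract the lines through the random, web-dependent points $(\tilde X_{N(q_A)},\tilde H_{N(q_A)})$ and pass to the limit you would have to show that $(X_\tau,H_\tau)$ is a.s.\ a point of type $(1,1)$ of the Brownian web and that the discrete lines through the approximating points converge to the continuous ones. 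This is exactly the extension the author avoids. The paper instead disintegrates over a \emph{deterministic} point $(x,h)$ of the upper half-plane using the area-preserving change of variables $t=T_{x,h}$ and the identity $k(x,f)=\mathcal{A}(f)/2+f(x)$: the expectation of $\varphi^A$ of the triplet becomes an integral over $(x,h)$ of a functional of the single two-sided walk $f_{x,h}$ weighted by $(1-2/A)^{(\mathcal A+2h)/2-1}$, and the only probabilistic input is Donsker's theorem (via Skorokhod's representation) for that one reflected/absorbed walk converging to the one two-sided Brownian motion $\Lambda_{x,h}$, followed by dominated convergence in $(x,h)$. No web topology, no process-level invariance principle, no continuity of $t\mapsto\Lambda_t$ is needed.

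On the interval coordinate you correctly identify that $f\mapsto I(f)$ is not continuous everywhere and that one must check $\Lambda_\tau$ (in the paper, $\Lambda_{x,h}$ for fixed $(x,h)$) avoids the bad configurations; the paper makes the same remark and resolves the outer endpoint by noting that Brownian motion a.s.\ becomes negative immediately after its first hitting time of $0$. However, your statement that the interior zeros of $\Lambda_\tau$ are ``transversal'' is wrong as written: $\Lambda_\tau\ge 0$, so every interior zero is a tangential touch of $0$, and a uniformly close nonnegative approximation need not have a zero there at all. What saves the argument is that on the interval between $0$ and the current position the profile is $|B|$ for a single (signed) Brownian line of the web, the discrete profile is $|S^{(n)}|$ for the coupled walk, and the relevant cut-point $o_\pm$ is the last zero of $B$ before the current position; since the zero set of $B$ has no isolated points, $B$ takes both signs in every left-neighborhood of that zero, hence so does $S^{(n)}$ for large $n$, which forces a nearby zero of $|S^{(n)}|$ and gives the stability of $o_\pm$ and of the dichotomy $O=0$ versus $O\ge 1$. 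With that correction, and with the web-extraction step either proved or replaced by the paper's change-of-variables reduction to a single walk, your argument would go through; as written, those two points are genuine gaps.
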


\begin{proof}
The trick will be to express the expectation of continuous bounded functionals of the triplet in terms of just one simple random walk/Brownian motion thanks to a simple change of variables from $\R_+$ into the upper half-space using the time-parametrization of the TSRM $t=T_{x,h}$. Indeed, the law of the occupation time is very easy to describe at the (random) first hitting times of $(x,h)$ both in the discrete and continuous models: In the discrete model, for $(x,h) \in \Z \times \N$ with $x+h$ even, it is a two-sided simple random walk properly reflected/absorbed and starting at $(x+1/2,h)$ for the forward one, $(x-1/2,h)$ for the backward one; in the continuous model, we have just seen that $\Lambda_{T_{x,h}} = \Lambda_{x,h}$.

\medskip

Let us take a continuous and bounded function $\varphi: \R \times C \times \R^2 \to \R$.  For each positive $A$, we define the rescaled functional $\varphi^A$ as
$$ \varphi^A ( x, \ell(\cdot), I  ) := \varphi (A^{-2/3} x, A^{-1/3} \ell ( A^{2/3} \cdot ), A^{-2/3} I).$$
 
For every $(x,h) \in \Z\times\N$ such that $x+h$ is even (we call $\mathcal F$ this set of pairs), we define $f_{x,h} \in \mathcal{C}$ to be the random continuous polygonal curve which concatenates the forward discrete BW-curve starting from $(x+1/2,h)$ and the backward one from $(x-1/2,h)$.
In other words, for every $(x,h) \in \mathcal {F}$ and $f \in \mathcal{C}$ such that $x \in I(f)$ and $f(x) = h$, we have $f_{x,h} := f$ on the integers if $E_{x,f}$ holds (and $f_{x,h}$ is then naturally extended to a polygonal curve).

Using (\ref{relkn}), we get:
\begin{align*}
&\E\left(\varphi^A \left( \tilde X_{N(q_A)},  \tilde \ell_{N(q_A)}(\cdot),  I (\ell_{N(q_A)} ) \right)\right)  \\
&= \sum_{k=0}^{\infty} \sum_{(x,h) \in {\mathcal F}}  \E\left(\varphi^A (x ,  f_{x,h}(\cdot), I(f_{x,h})) 1_{\{\mathcal{A}(f_{x,h}) + 2h = 2k\}} \right)  \PP(q_A = k)\\
&= \frac{2}{A} \sum_{(x,h) \in {\mathcal F}} \E \left( \varphi^A ( x, f_{x,h}(\cdot), I(f_{x,h})) \times  (1-2/A)^{(\mathcal{A}(f_{x,h}) + 2h )/2 -1} \right) 
\end {align*}

For all $(x,h)$ in the \emph{upper half-plane} $\R \times \R_+$, we now define the point $(x^A, h^A) \in {\mathcal F}$ such that $A^{2/3} x \in [x^A, x^A + 1)$  and $A^{1/3} h \in [h^A, h^A +1)$ or $ [h^A-1, h^A)$ depending on whether $x_A$ is even or odd, and define $f^A$ to be the rescaled function $f_{x^A, h^A}$ i.e. 
$$ f^A ( \cdot) := A^{-1/3} f_{x^A, h^A} ( A^{2/3} \cdot )$$
and we let ${\mathcal A}^A (f^A)$ denote the area between $f^A$ and the rescaled bottom line. Then, we can rewrite this last expression as
\begin {equation}
\label {funcdiscrete}
\int_{-\infty}^{\infty} \int_{0}^{\infty} \E \left( \varphi (A^{-2/3} x^A  , f^A (\cdot), I(f^A)) \times  (1-2/A)^{(A \, \mathcal{A}^A(f^A)  + 2 h^A)/2-1} \right) dx \ dh. 
\end {equation}
 
In the continuous setting, recall the definition of $T_{x,h} := \int \Lambda_{x,h}$ where $\Lambda_{x,h}$ is the BW-curve starting at $\Lambda_{x,h}(x) =h$. Using the (measure-preserving) change of variables $t = T_{x, h}$ (see Proposition 4.1 in \cite{TothWerner}) and Fubini's Theorem,
\begin{align}
 \E(\varphi(X_{\tau},\Lambda_{\tau},I_{\tau})) &= \E\left(\int_0^{\infty} e^{-t} \varphi(X_t,\Lambda_{X_t,\Lambda_t(X_t)},I_{X_t,\Lambda_t(X_t)})dt\right) \notag \\
 &= \E\left(\int_{-\infty}^{+\infty} \int_0^{\infty} e^{-T_{x,h}} \varphi(x,\Lambda_{x,h},I_{x,h})dh dx\right) \notag\\
&= \int_{-\infty}^{+\infty} \int_0^{\infty} \E\left(e^{-T_{x,h}} \varphi(x,\Lambda_{x,h},I_{x,h})\right)dh dx. \label{expressionC0}
\end{align}
where $I_{x,h}$ is the excursion of $\Lambda_{x,h}$ furthest from the origin.

Now, the convergence of (\ref{funcdiscrete}) towards this last expression boils down to a simple random walk/Brownian motion matter. Indeed, 
$f_{x^A,h^A}$ is the concatenation of two simple random walks and $\Lambda_{x,h}$ is a two-sided Brownian motion. A little care is needed here as $\Lambda_{x,h}$ is {not} a continuity point of the function which associates to $f \in C$ the opposite excursion from the $0$-excursion (see Billigsley \cite{Billingsley} for background). Nevertheless, the convergence holds thanks to the following classical argument: 

Let $h>0$ and let (for each $n$), $(S^n_k)_{k \in \N}$ denote a simple random walk starting at $[n^{1/3}h]$ and define
$S^{(n)}(\cdot):=n^{-1/3}S^n_{[n^{2/3}\cdot]}$ its renormalization. By Skorohod's representation Theorem, it is possible to couple all these walks $S^{(n)}$ with a one dimensional Brownian motion $B$ started at $h$ in such a way that $S^{(n)}$ almost-surely converges towards $B$ for the uniform topology on any compact time-interval. With continuity of $B$ and the fact that the Brownian motion almost surely becomes negative immediately after its first hitting time of the origin, it follows that the first hitting time of the $x$-axis by $S^{(n)}$ converges also almost-surely towards the first hitting time of the $x$-axis by $B$.

It follows that for every $(x,h) \in \R \times \R^*_+$,
\begin{align*}
\E \left( \varphi (A^{-2/3} x^A, f^A (\cdot), I(f^A)) \times  (1-2/A)^{(A \, \mathcal{A}^A(f^A)  + 2 h^A)/2-1} \right)
\longrightarrow \E\left(e^{-T_{x,h}} \varphi(x,\Lambda_{x,h},I_{x,h})\right)\end {align*}
as $A \to \infty$.
In order to deduce the convergence of \eqref{funcdiscrete} to \eqref{expressionC0}, it remains to apply the dominated convergence theorem (it is easy to see that the expectation (\ref{funcdiscrete}) admits the rough upper bound $\|\varphi\|_{\infty} \exp(-c ([x] + [h]))$ for all $A$ large enough and for some constant $c >0$ and all large $x$ and $h$ thanks to Markov property applied $[x]$ and $[h]$ times); this concludes the proof of  Proposition \ref{CVsimple}. 
\end{proof}

Combining Lemma \ref{conddistribdiscrete}. and Proposition \ref{CVsimple}. now also concludes the proof of our Theorem \ref{main2}.

\section{Probability to be perfectly hidden}\label{sec:probawholeint}

The burglar is therefore well hidden in the interval $I$. It can however happen that the interval $I$ is much shorter than the whole support of the local time (in particular when 
the burglar is close to its past maximum). It is natural to wonder what is the probability that $I_1$ is equal to the whole support $\mbox {supp}(\Lambda_1)$ (when this is the case, the burglar's strategy turned out to be particularly efficient...). This question is answered by the following statement:

\begin{propo}
The probability that $I_1$ is equal to the entire support of the local time is equal to 
$$1 - 9 \sqrt{3} \,\Gamma (2/3)^6 / (4 \pi^3) \approx 0.225.$$
\end{propo}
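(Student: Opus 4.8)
The plan is to combine the scaling invariance of the event $\{I_1 = \mathrm{supp}(\Lambda_1)\}$ with the measure-preserving change of variables $t = T_{x,h}$ already used to prove Proposition~\ref{CVsimple}, and to reduce the whole question to a one-dimensional Feynman--Kac computation for a Brownian motion weighted by the exponential of minus the area underneath it. The explicit constant then drops out of standard Airy function identities.

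First I would reduce the problem to an integral over the upper half-plane. The event that $I_1$ is the entire support is exactly the event that $O(\Lambda_1) = 0$, i.e.\ that the local time profile $\Lambda_1$ consists of a single excursion; by the scaling property of the TSRM this event has the same probability at every time $t>0$, hence also at the independent exponential time $\tau$. Applying the change of variables $t = T_{x,h} = \int \Lambda_{x,h}$ (Proposition~4.1 of \cite{TothWerner}, exactly as in the derivation of \eqref{expressionC0}) to the indicator of $\{\Lambda_{x,h}\ \mathrm{single\ excursion}\} = \{I_{x,h} = \mathrm{supp}(\Lambda_{x,h})\}$ gives
$$
\PP\big(I_1 = \mathrm{supp}(\Lambda_1)\big) \;=\; \int_{-\infty}^{\infty}\!\int_0^{\infty} \E\Big(e^{-T_{x,h}}\,1_{\{\Lambda_{x,h}\ \mathrm{single\ excursion}\}}\Big)\,dh\,dx.
$$

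Next I would split the curve $\Lambda_{x,h}$. By the $x\leftrightarrow -x$ symmetry it suffices to treat $x>0$ and multiply by $2$. For $x>0$, read off starting from the point $(x,h)$, the curve $\Lambda_{x,h}$ is a Brownian motion reflected above $0$ while its abscissa lies in $(0,x)$ and absorbed at $0$ once the abscissa leaves $[0,x]$; since a reflected Brownian motion agrees with an ordinary one up to its first visit to $0$, the event ``$\Lambda_{x,h}$ is a single excursion'' coincides, up to a null set, with the event that a Brownian motion started at $h$ does not reach $0$ before time $x$. On that event, conditionally on the value $h' := \Lambda_{x,h}(0)$, the curve decomposes into three conditionally independent pieces: the central piece over $[0,x]$, a Brownian motion running from $h$ to $h'$ over a time-interval of length $x$ and conditioned to stay positive; the right-hand piece over $[x,m_+]$, a Brownian motion from $h$ run until its first hit of $0$; and the left-hand piece over $[m_-,0]$, a Brownian motion from $h'$ run until its first hit of $0$. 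Therefore $T_{x,h}$ is, on this event, a sum of three conditionally independent areas, and, writing $u(a) := \E_a\big[e^{-\int_0^{T_0} B_s\, ds}\big]$ for a Brownian motion $B$ started at $a>0$ and killed at its first hitting time $T_0$ of $0$, and $q_x(h,h')$ for the sub-probability density at $h'$ of a Brownian motion started at $h$, killed at $0$ and weighted by $e^{-\int_0^x B_s\, ds}$, one gets
$$
\PP\big(I_1 = \mathrm{supp}(\Lambda_1)\big) \;=\; 2\int_0^{\infty}\!\int_0^{\infty}\!\int_0^{\infty} u(h)\,q_x(h,h')\,u(h')\;dh'\,dh\,dx.
$$

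Finally I would make everything explicit with Airy functions. The function $u$ solves $\tfrac12 u'' = a\,u$ on $(0,\infty)$ with $u(0)=1$ and $u$ bounded, so $u(a) = \Ai(2^{1/3}a)/\Ai(0) = 3^{2/3}\Gamma(2/3)\,\Ai(2^{1/3}a)$; integrating $x$ out, $\int_0^\infty q_x(h,h')\,dx$ is the Dirichlet Green's function $G(h,h')$ on $(0,\infty)$ of the operator $-\tfrac12\partial^2 + a$, built from the solution $\Ai(2^{1/3}a)$ that decays at $+\infty$ and the solution that vanishes at $0$, normalised by one half of their Wronskian. It then remains to evaluate
$$
\PP\big(I_1 = \mathrm{supp}(\Lambda_1)\big) \;=\; 2\int_0^{\infty}\!\int_0^{\infty} u(h)\,G(h,h')\,u(h')\;dh\,dh',
$$
a double integral of products of Airy functions; carrying it out with the classical primitives of products of two Airy functions (such as $\int_z^{\infty}\Ai(t)^2\,dt = \Ai'(z)^2 - z\,\Ai(z)^2$), the special values $\Ai(0) = 1/(3^{2/3}\Gamma(2/3))$, $\Ai'(0) = -1/(3^{1/3}\Gamma(1/3))$, $\mathrm{Bi}(0) = 1/(3^{1/6}\Gamma(2/3))$, the Wronskian $\Ai\,\mathrm{Bi}' - \Ai'\,\mathrm{Bi} = 1/\pi$, and the reflection identity $\Gamma(1/3)\Gamma(2/3) = 2\pi/\sqrt{3}$, one obtains $1 - 9\sqrt{3}\,\Gamma(2/3)^6/(4\pi^3)$. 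I expect the main obstacle to be exactly this last evaluation: getting the Green's function of $-\tfrac12\partial^2 + a$ right with all the $2^{1/3}$ rescalings and the Wronskian normalisation, and then evaluating the double Airy integral in closed form without losing a constant. A secondary point that needs care is the conditional independence used in the splitting step, which should be read off from the Markov property of the Brownian web curve $\Lambda_{x,h}$ at the two abscissas $0$ and $x$.
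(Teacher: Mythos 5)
Your reduction coincides with the paper's up to and including the change of variables $t=T_{x,h}$, and your identification of the single-excursion event with the event that the curve does not vanish between the abscissas $0$ and $x$ is exactly the observation the paper uses. From there the two arguments genuinely diverge. The paper integrates $x$ out \emph{first}: the indicator of $\{\xi'<x<\xi\}$ integrates to the length $\xi-\xi'$ of the excursion interval, which by symmetry and independence of the two sides reduces everything to $2\int_0^\infty u(h)v(h)\,dh$ with $v(h)=\EE_h\bigl(\xi e^{-\int_0^\xi B}\bigr)$; the key step is then the probabilistic identity $v=u'(0)u-u'$, obtained from the strong Markov property at level $\eps$, after which only the elementary integrals $\int_0^\infty u^2=u'(0)^2/2$ and $\int_0^\infty uu'=-1/2$ remain. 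You instead keep $x$, split the curve at the abscissas $0$ and $x$ into three conditionally independent pieces, and integrate $x$ out at the level of the kernel, producing the Dirichlet Green's function $G$ of $-\frac12\partial^2+a$. This is correct and equivalent to the paper's route, since $\int_0^\infty G(h,h')u(h')\,dh'=v(h)$; your conditional-independence step is also sound (the curve $\Lambda_{x,h}$ is built from two independent one-sided processes issued from $(x,h)$, and one applies the Markov property at abscissa $0$), though note the middle piece is \emph{killed} at $0$ rather than conditioned to stay positive, as your definition of $q_x$ in fact correctly reflects.

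The one place where your write-up would not go through as stated is the final evaluation, which you defer. After performing the inner integral with $\int_z^\infty \Ai^2=\Ai'(z)^2-z\Ai(z)^2$, the remaining $h$-integral involves products of three and four Airy-type functions (terms like $\Ai^2\,\mathrm{Bi}$ and $\Ai\,\mathrm{Bi}\,(\Ai')^2$), not products of two, so the toolbox you list is insufficient and the constant will not simply ``drop out''. The clean fix inside your own framework is to observe that $w:=\int_0^\infty G(\cdot,h')u(h')\,dh'$ is the bounded solution of $-\frac12 w''+aw=u$ with $w(0)=0$, and that $w=u'(0)u-u'$ solves this problem (check: $(cu-u')''=2cau-2u-2au'$ using $u''=2au$); this is precisely the paper's identity, and it collapses your double integral to $2u'(0)\int_0^\infty u^2-2\int_0^\infty uu'=u'(0)^3+1$, from which the stated value follows via $u'(0)=2^{1/3}\Ai'(0)/\Ai(0)$.
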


\begin {proof}
 
 Let us notice at first that with the scaling property, we have:
$$ \PP(\mbox {supp} (\Lambda_1) = I_1 ) = \PP( \mbox {supp} (\Lambda_{\tau}) = I_\tau),
$$ 
where $\tau$ is an independent exponential variable with mean $1$, just as before.
Recall that for a point $(x,h)$ in the upper half-plane, the probability that $\Lambda_{x,h}$ has only one excursion corresponds to the event that the Brownian motion that starts from the point $(x,h)$ in the direction of $0$ does hit $0$ ``on the other side'' of $0$. 
Using the measure preserving change of variable $t = T_{x,h}$ from $\R_+$ onto the upper half-plane, 
\begin{align}
\PP(\mbox {supp} (\Lambda_\tau) = I_\tau )
&= \int_0^\infty e^{-t} \EE \left( \mbox{supp} (\Lambda_t) = I_t \right)  dt \notag  \\ 
&= \int_{0}^{\infty} \int_{-\infty}^{\infty}  \EE\left(e^{-T_{x,h}} 1_{\mbox {supp}(\Lambda_{x,h} ) = I_{x,h}} \right) dx dh \notag \\
&= \int_{0}^{\infty} \int_{-\infty}^{\infty} 
\EE_h\left(\exp\left(-\int_{\xi'}^{\xi} B_t dt\right)  1_{ \xi' < x < \xi} \right) dx dh \notag 
\end{align}
where $(B,\PP_h)$ is a two-sided Brownian motion started at level $h$ at time $0$, and $\xi := \inf\{t\ge 0,\; B_t =0\}$ and $\xi' := \sup\{t\le 0,\; B_t =0\}$. 
With Fubini's theorem, we can now swap the expectation and the integral with respect to $x$, this shows that
\begin {align}
\PP( \mbox {supp} (\Lambda_{\tau}) = I_\tau ) 
&= 
 \int_{0}^{\infty}  \EE_h\left( (\xi - \xi') \exp\left(-\int_{\xi'}^{\xi} B_t dt\right)  \right)  dh
\notag \\
&= 
2  \int_{0}^{\infty}  \EE_h \left( \xi \exp \left( -\int_{\xi'}^{\xi} B_t dt \right)  \right)  dh.
\label {firsttag}
\end {align}
Note that with the scaling property of Brownian motion ($\xi$ and $\xi'$ scale like $h^2$ and the integral like $h^3$), 
\begin{align*}
 \EE_h \left( \xi \exp \left( -\int_{\xi'}^{\xi} B_t dt \right)  \right) &= \EE_1 \left(h^2 \xi \exp \left( - h^3 \int_{\xi'}^{\xi} B_t dt \right)  \right)\\
&= \frac{d}{dh}\left[\EE_1\left(\frac{\xi}{3 \int_{\xi'}^{\xi} B_t dt} \exp \left( - h^3 \int_{\xi'}^{\xi} B_t dt\right)\right)\right]
\end{align*}
(we can exchange the expectation $\EE_1$ and the differentiation with respect to $h$ because the function $h\mapsto h^2 \xi \exp \left( - h^3 \int_{\xi'}^{\xi} B_t dt \right)$ is bounded from above by $(2/(3e))^{2/3}\xi/(\int_{\xi'}^{\xi} B_t dt)^{2/3}$ whose expectation is finite).
Therefore, the integral \eqref{firsttag} is equal to
$2/3 \times \EE_1\left(\xi / \int_{\xi'}^\xi B_t dt \right)$, which seems however difficult to compute directly.

\medskip

\noindent Let us compute \eqref{firsttag} with a different method. Independence between the two-sides of the Brownian motion shows that
\begin{align}
\EE_h\left( \xi  \,\exp\left(-\int_{\xi'}^{\xi} B_t dt\right)\right)= u(h)\,  v(h) \label{simpleMP}
\end{align}
where
$$u(h) := \EE_h\left(\exp\left(-\int_0^{\xi} B_t dt\right) \right)$$
and
$$
v(h) := \EE_h\left(\xi \, \exp\left(-\int_0^{\xi} B_t dt\right)\right).$$
It is not difficult to see (and has been used in several papers, see for instance formula 2.8.1 p.~167 in \cite{Borodin} or \cite{Areatillfirstpassagetime}) that the function $u$ solves the differential equation: 
\begin{align}\label{diffequ}
 u''(x) = 2 x\, u(x)
\end{align}
 with initial condition $u(0) = 1$ (and is bounded on $\R^+$) which implies that it can be expressed in terms of the airy function $\Ai$: $u(\cdot)= \Ai(2^{1/3} \cdot)/\Ai(0)$.

\noindent Let us now show that 
\begin {align}
v(h)
&= u(h) u'(0) - u'(h). \label{relEh}
\end{align}
We fix $h >0$, $\eps >0$ and define $\tilde \xi$ to be the first time at which a Brownian motion $B$ started at $h+\eps$ hits $\eps$. The strong Markov property 
shows that
\begin{align*}
 u(h + \eps) &= \EE_{h+\eps}\left(\exp\left(- \int_0^{\tilde \xi} B_t dt\right)\right) u(\eps)  \\
&= u(\eps)  \EE_{h}\left(\exp\left(- \int_0^{\xi} (B_t + \eps) dt\right)\right) \\
&= u(\eps) \EE_{h}\left(\exp ( - \eps \xi) \times \exp\left(- \int_0^{\xi} B_t dt\right)\right)
\end {align*}
so that 
\begin {align*}
u( h+ \eps) - u (h) &= 
u(\eps)  \EE_{h}\left((\exp ( - \eps \xi) -1)  \times \exp\left(- \int_0^{\xi} B_t dt\right)\right)
\\& \null \quad + (u(\eps) - 1)  \EE_{h}\left( \exp\left(- \int_0^{\xi} B_t dt\right)\right).
\end {align*}
Letting $\eps$ to $0$, using the fact that we know that $u$ is $C^1$, we get (\ref {relEh}) by bounded convergence. 

Notice also that with an integration by parts and the differential equation (\ref{diffequ}), we get that:
\begin{align}
\int_0^{\infty} u^2(y) dy = u'(0)^2/2 \label{intu2} 
\end{align}

\noindent We are now ready to conclude: The relations (\ref {firsttag}), (\ref{simpleMP}), (\ref{relEh}) and (\ref{intu2}) lead to:
\begin{align*}
  \PP(\mbox {supp}(\Lambda_1) = I_1 ) &=
2 \int_0^\infty dh u(h) v(h) \\ 
&= 2 \int_0^\infty u(h)^2 u'(0) dh - 2 \int_0^\infty u'(h) u(h) dh \\
&=  u'(0)^3 + 1
\end{align*}
Note that by 10.4.4 and 10.4.5 \cite{Abramowitz}, we have $u'(0) = -6^{1/3} \Gamma(2/3)/\Gamma(1/3)$ and the studied probability is equal to $ 1 - (6\, \Gamma(2/3)^3 / {\Gamma(1/3)^3}) = 1 - 9 \sqrt{3} \Gamma (2/3)^6 / (4 \pi^3)$.
\end {proof}

\medbreak

\noindent \textbf{Acknowledgements:} I warmly thank my supervisor Wendelin Werner for his precious help and advices throughout this work and for his careful reading of this paper. I am also grateful to the anonymous referee for his/her useful comments. 

\bibliographystyle{plain}
\bibliography{biblioloicond}

\begin{thebibliography}{10}

\bibitem{Abramowitz}
Milton Abramowitz and Irene Stegun.
\newblock {\em Handbook of Mathematical Functions with Formulas, Graphs, and
  Mathematical Tables}.
\newblock Dover Publications, 1964.

\bibitem{BBAldous}
David~J. Aldous.
\newblock Brownian excursion conditioned on its local time.
\newblock {\em Electron. Comm. Probab.}, 3:79--90 (electronic), 1998.

\bibitem{APP}
D.~Amit, G.~Parisi, and L.~Peliti.
\newblock Asymptotic behavior of the `true' self-avoiding walk.
\newblock {\em Phys. Rev. B}, 27:1635--1645, 1983.

\bibitem{Arratia}
Richard~Alejandro Arratia.
\newblock {\em Coalescing Brownian motion on the line}.
\newblock ProQuest LLC, Ann Arbor, MI, 1979.
\newblock Thesis (Ph.D.)--The University of Wisconsin - Madison.

\bibitem{Billingsley}
Patrick Billingsley.
\newblock {\em Convergence of probability measures}.
\newblock Wiley Series in Probability and Statistics: Probability and
  Statistics. John Wiley \& Sons Inc., New York, second edition, 1999.
\newblock A Wiley-Interscience Publication.

\bibitem{Borodin}
Andrei~N. Borodin and Paavo Salminen.
\newblock {\em Handbook of {B}rownian motion---facts and formulae}.
\newblock Probability and its Applications. Birkh\"auser Verlag, Basel, second
  edition, 2002.

\bibitem{Burdzy}
Krzysztof Burdzy.
\newblock On nonincrease of {B}rownian motion.
\newblock {\em Ann. Probab.}, 18(3):978--980, 1990.

\bibitem{NewmanBW}
L.~R.~G. Fontes, M.~Isopi, C.~M. Newman, and K.~Ravishankar.
\newblock The {B}rownian web: characterization and convergence.
\newblock {\em Ann. Probab.}, 32(4):2857--2883, 2004.

\bibitem{Areatillfirstpassagetime}
Michael~J. Kearney and Satya~N. Majumdar.
\newblock On the area under a continuous time {B}rownian motion till its
  first-passage time.
\newblock {\em J. Phys. A}, 38(19):4097--4104, 2005.

\bibitem{NewmanCV}
C.~M. Newman and K.~Ravishankar.
\newblock Convergence of the {T}\'oth lattice filling curve to the
  {T}\'oth-{W}erner plane filling curve.
\newblock {\em ALEA Lat. Am. J. Probab. Math. Stat.}, 1:333--345, 2006.

\bibitem{STW}
Florin Soucaliuc, B{\'a}lint T{\'o}th, and Wendelin Werner.
\newblock Reflection and coalescence between independent one-dimensional
  {B}rownian paths.
\newblock {\em Ann. Inst. H. Poincar\'e Probab. Statist.}, 36(4):509--545,
  2000.

\bibitem{TothWerner}
B{\'a}lint T{\'o}th and Wendelin Werner.
\newblock The true self-repelling motion.
\newblock {\em Probab. Theory Related Fields}, 111(3):375--452, 1998.

\bibitem{BBYor}
J.~Warren and M.~Yor.
\newblock The {B}rownian burglar: conditioning {B}rownian motion by its local
  time process.
\newblock In {\em S\'eminaire de {P}robabilit\'es, {XXXII}}, volume 1686 of
  {\em Lecture Notes in Math.}, pages 328--342. Springer, Berlin, 1998.

\end{thebibliography}
D\'epartement de Math\'ematiques et Applications

Ecole Normale Sup\'erieure 

75230 Paris cedex 05

France 

\medbreak

laure.dumaz@ens.fr

\end{document}